\documentclass{article}
\usepackage{amsmath,amssymb,amsthm}
\usepackage{graphicx,subfigure,float,url,color}
\usepackage{mathrsfs}
\usepackage[colorlinks=true]{hyperref}
\usepackage{pdfsync}
\usepackage{enumitem}

\graphicspath{{fig/}}

\topmargin -1cm
\textheight 21cm
\textwidth 15cm 
\oddsidemargin 1cm

\def\R{\mathrm{I\kern-0.21emR}}
\def\N{\mathrm{I\kern-0.21emN}}

\newcommand{\C} {\mathbb{C}}

\renewcommand{\geq}{\geqslant}
\renewcommand{\leq}{\leqslant}

\newtheorem{theorem}{Theorem}  
\newtheorem{proposition}{Proposition}
\newtheorem{corollary}{Corollary}

\newtheorem{lemma}{Lemma}

\theoremstyle{definition}\newtheorem{remark}{Remark}

\newcommand{\xmin}{x_{\mathrm{min}}}
\newcommand{\tmin}{t_{\mathrm{min}}}
\newcommand{\umin}{u_{\mathrm{min}}}
\newcommand{\umax}{u_{\mathrm{max}}}
\newcommand{\omin}{\omega_{\mathrm{min}}}
\newcommand{\omax}{\omega_{\mathrm{max}}}

\title{A meaningful optimal control problem in quantum and classical physics}

\author{
Arnaud Lazarus\footnote{Sorbonne Universit\'e, CNRS, Institut Jean Le Rond d'Alembert, F-75005 Paris, France, also affiliated with Massachusetts Institute of Technology, Department of Mathematics, Cambridge, MA 02139, USA (\texttt{arnaud.lazarus@sorbonne-universite.fr})).}
\and
Emmanuel Tr\'elat\footnote{Sorbonne Universit\'e, Universit\'e Paris Cit\'e, CNRS, Inria, Laboratoire Jacques-Louis Lions, LJLL, F-75005 Paris, France (\texttt{emmanuel.trelat@sorbonne-universite.fr}).}
}

\date{}

\begin{document}

\maketitle

\begin{abstract}
In this paper we study and solve an optimal control problem motivated by applications in quantum and classical physics. Although apparently simple, this optimal control problem is not easy to solve and we resort to various elaborated methods of optimal control theory. We finally show its relationships to two problems in physics: the computation of the ground state for 1D Schr\"odinger operators with a finite potential well, and the optimal dynamical Kapitza stabilization problem. 
\end{abstract}

\section{The optimal control problem}\label{sec_OCP}

Given any $\umin,\umax\in\R$ such that $\umin<0<\umax$, given any $T>0$, we consider the optimal control problem 
\begin{subequations}\label{OCP}
\begin{align}
& \dot x(t) = y(t) , \label{OCP_dyn_x} \\
& \dot y(t) = -u(t) x(t)  , \label{OCP_dyn_y} \\
& \umin \leq u(t)\leq \umax , \label{OCP_cont_const} \\
& \min \int_0^T u(t)\, dt , \label{OCP_cost} 
\end{align}
\end{subequations}
where \eqref{OCP_dyn_x}, \eqref{OCP_dyn_y} and \eqref{OCP_cont_const} are written for almost every $t\in[0,T]$, with the periodicity conditions and nontriviality constraint
\begin{subequations}\label{OCP_0}
\begin{align}
& x(0)=x(T), \quad y(0)=y(T),  \label{OCP_term} \\
& x(0)^2+y(0)^2>0 .  \label{OCP_>0}
\end{align}
\end{subequations}
The (non-closed) constraint \eqref{OCP_>0} ensures nontriviality of optimal solutions, if they exist. Indeed, if we remove the constraint \eqref{OCP_>0}, then obviously the unique optimal trajectory is $x(\cdot)=y(\cdot)=0$, $u(\cdot)=\umin$, and the optimal value is $T\umin$.

\begin{theorem}\label{main_thm}
Let $T>0$ be arbitrary.
There exists a unique optimal solution $(x(\cdot),y(\cdot),u(\cdot))$ of the optimal control problem \eqref{OCP}-\eqref{OCP_0}, satisfying
\begin{equation}\label{OCP_per_0}
x(0)=x(T)=1, \quad y(0)=y(T)=0 ,
\end{equation}
and
\begin{equation}\label{0<x<=1}
0<x(t)\leq 1\qquad\forall t\in[0,T].
\end{equation}
Extending $(x(\cdot),y(\cdot),u(\cdot))$ to the whole $\R$ by $T$-periodicity, any other optimal solution of \eqref{OCP}-\eqref{OCP_0} is given by $(\mu x(\cdot+\delta),\mu y(\cdot+\delta),u(\cdot+\delta))$ for some $\mu\neq 0$ and $\delta\in\R$ (i.e., homothety and shifting in time).

The optimal control $u(\cdot)$ is bang-bang with two switchings:
\begin{equation}\label{u_opt}
u(t) = \left\{ \begin{array}{ll}
\umax & \textrm{if}\ \ 0\leq t<t_1 , \\
\umin & \textrm{if}\ \ t_1<t<T-t_1 , \\
\umax & \textrm{if}\ \ T-t_1<t\leq T ,
\end{array}\right.
\end{equation}
where the switching time $t_1 \in \Big(0, \frac{1}{\omax}\mathrm{Arctan}\big(\frac{\omin}{\omax}\big) \Big)$ is characterized by the bijective relation
\begin{equation}\label{Tt1}
T = \frac{1}{\omin} \ln \left( \frac{1+\frac{\omax}{\omin}\tan(\omax t_1)}{1-\frac{\omax}{\omin}\tan(\omax t_1)} \right) +2t_1
\end{equation}
($T$ is an increasing function of $t_1$) with
\begin{equation}\label{def_omega}
\omin = \sqrt{-\umin},\quad\omax = \sqrt{\umax}.
\end{equation}
The optimal trajectory $(x(\cdot),y(\cdot))$ is symmetric with respect to the $x$-axis and is homeomorphic to a clockwise circle (see Figure \ref{fig_thm}), entirely contained in the half-plane $x>0$, consisting of the concatenation of an arc of ellipse and of an arc of hyperbole. Precisely, we have
\begin{equation}\label{xopt}
x(t) = \left\{ \begin{array}{ll}
\cos(\omax t) & \textrm{if}\ \ 0\leq t\leq t_1 , \\[2mm]
\frac{1}{2}\big( \cos(\omax t_1)-\frac{\omax}{\omin}\sin(\omax t_1) \big) e^{\omin(t-t_1)} & \\[1mm]
\qquad\quad + \frac{1}{2}\big( \cos(\omax t_1)+\frac{\omax}{\omin}\sin(\omax t_1) \big) e^{-\omin(t-t_1)} \ \ & \textrm{if}\ \ t_1\leq t\leq T-t_1 , \\ [2mm]
\cos(\omax (T-t)) & \textrm{if}\ \ T-t_1\leq t\leq T .
\end{array}\right.
\end{equation}
The cost of the optimal trajectory is
\begin{equation}\label{cost}
\begin{split}
\int_0^T u(t)\, dt &= 2t_1\umax+(T-2t_1)\umin  \\
&= -\omin\ln \left( \frac{1+\frac{\omax}{\omin}\tan(\omax t_1)}{1-\frac{\omax}{\omin}\tan(\omax t_1)} \right) + 2 \omax^2 t_1 <0.
\end{split}
\end{equation}
It is always negative.
\end{theorem}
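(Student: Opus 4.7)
My plan is to apply Pontryagin's maximum principle (PMP) and exploit the symmetries of (\ref{OCP})-(\ref{OCP_0}). First, the scaling invariance $(x,y,u)\mapsto(\mu x,\mu y,u)$ lets me normalize $x(0)^2+y(0)^2=1$; existence of a minimizer then follows from Filippov's theorem (compact convex velocity set $\{(y,-ux):u\in[\umin,\umax]\}$, linear and hence bounded dynamics, linear cost), after a direct construction of a feasible nontrivial periodic trajectory for every $T>0$. To any optimal $(x,y,u)$, PMP associates $p^0\leq 0$ and an adjoint $(p_x,p_y)$ with $\dot p_x=u\,p_y$, $\dot p_y=-p_x$, periodic transversality $(p_x,p_y)(0)=(p_x,p_y)(T)$, and the maximization condition selecting $u$ from the sign of the switching function $\Phi(t)=p^0-p_y(t)x(t)$: $u=\umax$ on $\{\Phi>0\}$ and $u=\umin$ on $\{\Phi<0\}$.

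\textbf{Excluding singular and abnormal extremals.} On $\{\Phi\equiv 0\}$, differentiating twice along the dynamics yields $p_x x=p_y y$ and $p_x y+u\,p_y x=0$; elimination forces $u=-y^2/x^2$ and then $\tfrac{d}{dt}(y/x)=0$, so singular trajectories are exponential rays $y=cx$, periodic only in the trivial case $y\equiv 0$, $x\equiv x_0$, $u\equiv 0$, which has cost $0$ and is excluded a posteriori by the strict negativity in (\ref{cost}). For the abnormal case $p^0=0$, the key observation is that $(\tilde x,\tilde y)=(p_y,-p_x)$ obeys the same system as $(x,y)$; comparing their periodicity under the shared $u(t)$ and using PMP nontriviality rules out $p^0=0$. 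Hence $u$ is bang-bang with isolated switchings, and I set $p^0=-1$.

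\textbf{Symmetry, explicit matching, and the implicit equation.} The involution $\sigma:(x,y,u)(t)\mapsto(x,-y,u)(T-t)$ preserves the whole problem and the cost, so I look for $\sigma$-invariant extremals. Combined with the scaling normalization and a time shift placing the origin at a $\umin\to\umax$ switch, this fixes $x(0)=1$, $y(0)=0$. On $[0,t_1]$ the dynamics are harmonic of frequency $\omax$, giving $x(t)=\cos(\omax t)$, $y(t)=-\omax\sin(\omax t)$; on $[t_1,T-t_1]$ they are hyperbolic with conserved quantity $y^2-\omin^2 x^2$, and matching at $t_1$ yields (\ref{xopt}). The $\sigma$-symmetry condition $y(T/2)=0$ becomes, after computation, exactly the implicit relation (\ref{Tt1}); a direct check that $dT/dt_1>0$ on $(0,\tfrac{1}{\omax}\mathrm{Arctan}(\omin/\omax))$, with $T\to 0^+$ at the left endpoint and $T\to+\infty$ at the right, establishes the announced bijection. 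Positivity $0<x(t)\leq 1$ is then read off (\ref{xopt}): both hyperbolic coefficients in it are strictly positive on the admissible interval, and $\cos(\omax t_1)\in(0,1)$. The cost formula (\ref{cost}) follows from $\int_0^T u=2t_1\umax+(T-2t_1)\umin$ combined with (\ref{Tt1}); its strict negativity reduces to the one-variable inequality $\tfrac{2}{r}\arctan(sr)<\ln\tfrac{1+s}{1-s}$ with $r=\omin/\omax$ and $s\in(0,1)$, verified by equality at $s=0$ together with a derivative comparison.

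\textbf{Uniqueness and the main obstacle.} Uniqueness modulo the scaling and time-shift symmetries splits into two claims: (i) any optimal extremal is $\sigma$-invariant, to be proved by a symmetrization argument based on the rigidity of the bang-bang extremal flow; and (ii) no extremal with more than two switchings per period can compete. Claim (ii) is the expected main obstacle: by periodicity and the linear structure of the switching function along bang arcs, a $2n$-switching $\sigma$-invariant extremal decomposes as $n$ copies of the single-loop template with period $T/n$, so its total cost equals $n\,c(T/n)$ where $c(T)$ is the single-loop cost (\ref{cost}) read through the bijection (\ref{Tt1}). Establishing $n\,c(T/n)>c(T)$ for $n\geq 2$ reduces to showing that the map $T\mapsto t_1(T)/T$ is strictly decreasing; this I plan to verify by differentiating the two implicit relations jointly, and this monotonicity analysis is where the bulk of the technical work lies.
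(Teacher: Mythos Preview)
Your setup through the explicit candidate is sound and parallels the paper: existence, the PMP formulation, the derivation of (\ref{xopt}) and (\ref{Tt1}), the monotonicity of $T(t_1)$, and the negativity of the cost all go through essentially as you outline. The singular-arc computation is also close in spirit to the paper's, though note that you use $x\neq 0$ before having established it, and that a singular arc need not span the whole period---you must rule out bang--singular--bang concatenations, not just fully singular periodic solutions.

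The real gap is in your uniqueness argument. Claim~(ii) asserts that a $\sigma$-invariant bang-bang extremal with $2n$ switchings ``decomposes as $n$ copies of the single-loop template with period $T/n$'', but this does not follow from what you have. The $\sigma$-symmetry only pairs the switching times as $t_k+t_{2n+1-k}=T$; it does not force equal spacing. For four switchings, the pattern $\umax|\umin|\umax|\umin|\umax$ carries two free parameters $0<s_1<s_2<T/2$, and only the one-parameter subfamily $s_1+s_2=T/2$ corresponds to two copies of a $T/2$-loop. The ``linear structure of the switching function along bang arcs'' does not close this: along each arc $\Phi(t)=-1-p_y(t)x(t)$ is a trigonometric or hyperbolic polynomial, but the adjoint initial data $(p_x,p_y)(0)$ is a priori independent of $(x,y)(0)$ (the period map could be the identity on a two-dimensional space), so nothing yet forces $(-p_y,p_x)\propto(x,y)$ and hence nothing forces switchings to occur at fixed levels of $x$. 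Claim~(i) is circular as stated: applying $\sigma$ to an optimal yields another optimal of equal cost, but concluding that the two coincide is precisely the uniqueness you are after.

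The paper's route is quite different and does not proceed by classifying extremals by switching count. It first proves $x(t)>0$ for \emph{every} optimal trajectory by a direct comparison: if $x_{\min}\leq 0$, one intersects the curve with the family of hyperbolae $x^2-y^2/\omin^2=c$ and replaces a subarc by the cheaper $u\equiv\umin$ arc along a suitably chosen hyperbola of matching duration. With $x>0$ in hand, uniqueness is obtained by a global Green--Riemann argument using the one-forms $\alpha=dx/y$ and $\beta=-dy/x$: integrating $d\alpha$ and $d\beta$ over the region enclosed between a putative optimal and the candidate shows that the candidate reaches $(x_{\min},0)$ both strictly faster and with strictly smaller $\int u$. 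A preliminary ``surgery'' step (excising sub-loops and regluing them by homothety) reduces the general bang-bang competitor to a Jordan curve suitable for Stokes, and the time saved is then filled by a small negative-cost loop (constructed from the candidate itself at a shorter period) to produce a strict contradiction. This Stokes-based comparison is the crux of the paper's proof, and your outline has no analogue of it.
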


\begin{figure}[h]
\centerline{\includegraphics[width=9cm]{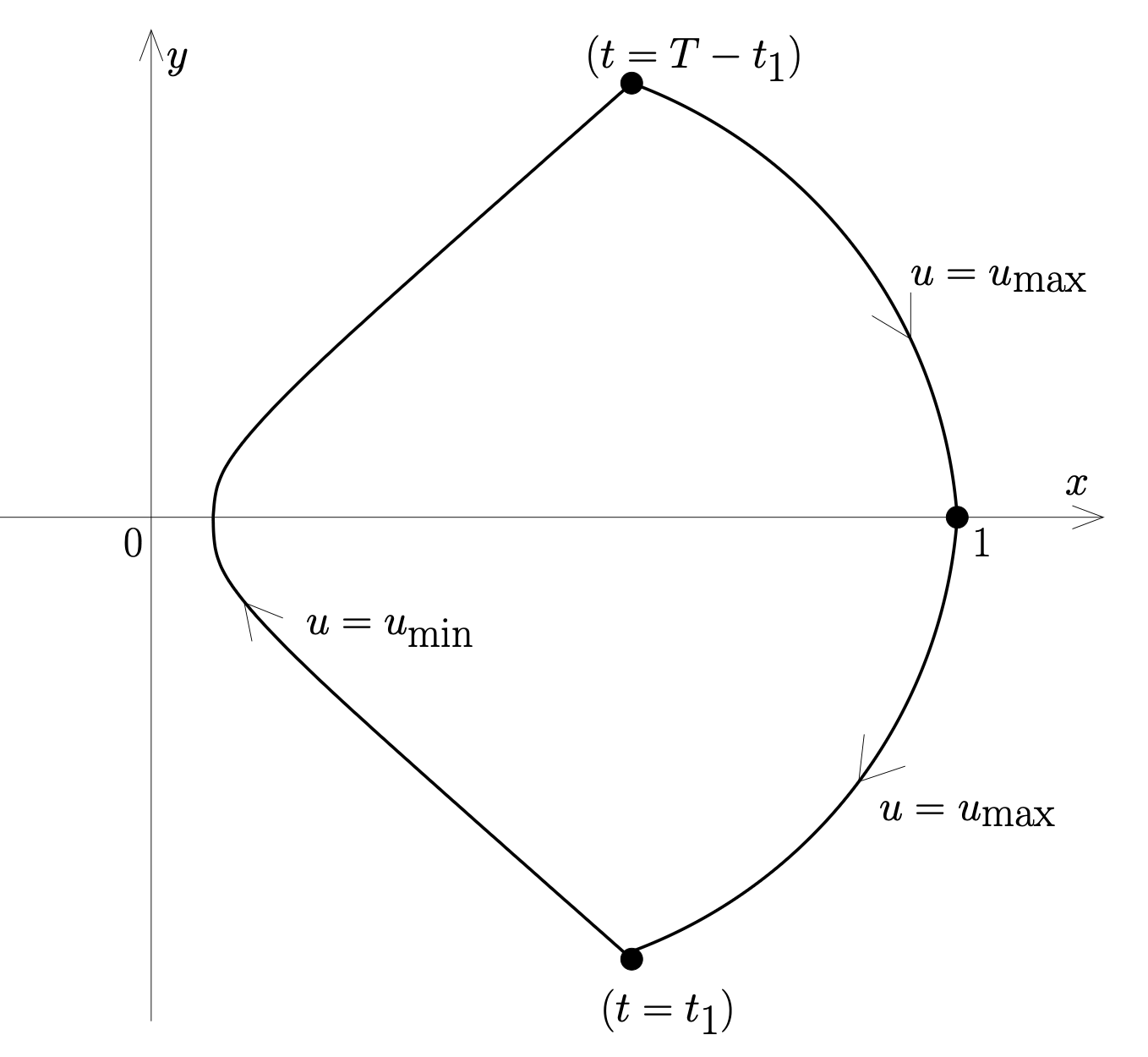}}
\caption{Optimal trajectory (Theorem \ref{main_thm}).}\label{fig_thm}
\end{figure}

\begin{remark}\label{rem1}
The optimal trajectory starts (and ends) at $(1,0)$ with a vertical tangent. On $[0,t_1]\cup[T-t_1,T]$, the curve $t\mapsto(x(t),y(t))$ follows an ellipse of equation $x^2+\frac{y^2}{\omax^2}=1$. The value $x(0)=x(T)=1$ is the maximal value of $x(t)$ as $t\in[0,T]$.

On $[t_1,T-t_1]$, the curve $t\mapsto(x(t),y(t))$ follows a hyperbole of equation $x^2-\frac{y^2}{\omin^2} = c(t_1)$ where $c(t_1)=\cos^2(\omax t_1)\Big( 1-\frac{\omax^2}{\omin^2}\tan^2(\omax t_1)\Big)>0$.
We have $y(\frac{T}{2})=0$, and $x(\frac{T}{2})=\sqrt{c(t_1)}$ is the minimal value of $x(t)$. 

The relation \eqref{Tt1} gives $T$ as a function of $t_1$, which is increasing (see Lemma \ref{lem_Tt1} further).
When $T\rightarrow+\infty$, we have:
\begin{itemize}
\item $\displaystyle t_1\longrightarrow\frac{1}{\omax}\mathrm{Arctan}\Big(\frac{\omin}{\omax}\Big)$.
\item $\displaystyle x(t_1)=\cos(\omax t_1)\longrightarrow\frac{\omax}{\sqrt{\omin^2+\omax^2}} = \sqrt{\frac{\umax}{\umax-\umin}}$.
\item $\displaystyle y(t_1)=-\omax\sin(\omax t_1) \longrightarrow -\frac{\omin\omax}{\sqrt{\omin^2+\omax^2}} = -\sqrt{\frac{-\umin\umax}{\umax-\umin}}$.
\item The curve $t\mapsto(x(t),y(t))$, restricted to $\big[t_1,\frac{T}{2}\big]$ (along which $u(t)=\umin$), converges to the segment joining the point $\Big( \sqrt{\frac{\umax}{\umax-\umin}}, -\sqrt{\frac{-\umin\umax}{\umax-\umin}} \Big)$ to the point $(0,0)$.
Moreover, for $t\in\big[t_1,\frac{T}{2}\big]$, we have
$$
x(t) \sim  \frac{\omax}{\sqrt{\omin^2+\omax^2}} \exp\bigg( \frac{\omin}{\omax} \mathrm{Arctan}\Big(\frac{\omin}{\omax}\Big) \bigg) \left( e^{\omin(t-T)}+e^{-\omin t} \right)
$$
In particular, 
$$
x\Big(\frac{T}{2}\Big) \sim \frac{2\omax}{\sqrt{\omin^2+\omax^2}} \exp\bigg( \frac{\omin}{\omax} \mathrm{Arctan}\Big(\frac{\omin}{\omax}\Big) -\omin\frac{T}{2} \bigg) .
$$
\item The cost of the trajectory satisfies $\int_0^T u(t)\, dt \sim T\umin < 0$.
\end{itemize}
Actually, when $T\rightarrow+\infty$, the curve $(x(\cdot),y(\cdot))$ converges to a ``teardrop", symmetric with respect to the $x$-axis, consisting of a ``V" (two segments) rotated by $-\pi/2$, with edge at the origin, completed with an arc of ellipse. We refer to Section \ref{sec_turnpike} for more comments on this manifestation of the well known turnpike phenomenon. 
\end{remark}


\section{Proof of Theorem \ref{main_thm}}
\subsection{Preliminaries and first reduction of the problem}\label{sec_firstreduction}
\paragraph{Existence of an optimal solution.}
We claim that there exists at least one optimal trajectory $(x(\cdot),y(\cdot),u(\cdot))$ solution of \eqref{OCP}-\eqref{OCP_0}. 

Indeed, we preliminary note that the (constant) trajectory defined by $x(t)=1$, $y(t)=0$ and $u(t)=0$ for every $t\in[0,T]$ is a solution of \eqref{OCP_dyn_x}-\eqref{OCP_dyn_y}-\eqref{OCP_cont_const}-\eqref{OCP_term}-\eqref{OCP_>0}, hence the set of admissible trajectories is nonempty. Now, the existence of an optimal solution follows from standard existence results (see \cite[Theorem 2.9]{Trelat_bookSB} or see \cite{Cesari, LeeMarkus}), noting that the control system \eqref{OCP_dyn_x}-\eqref{OCP_dyn_y} is control-affine, that the cost functional \eqref{OCP_cost} is convex, and that the controls are bounded (by \eqref{OCP_cont_const}).

\paragraph{Preliminary remarks.} We start with some easy remarks.

\begin{lemma}\label{lem_prelim}
Given any optimal solution $(x(\cdot),y(\cdot),u(\cdot))$ of \eqref{OCP}-\eqref{OCP_0}:
\begin{enumerate}[label=$\bf (A_{\theenumi})$]
\item\label{A1} We have $x(t)^2+y(t)^2>0$ for every $t\in[0,T]$.
\item\label{A2} Extending $(x(\cdot),y(\cdot),u(\cdot))$ to the whole $\R$ by $T$-periodicity, for any $\delta\neq 0$, $(x(\cdot+\delta),y(\cdot+\delta),u(\cdot+\delta))$ (translation in time) is also an optimal solution of \eqref{OCP}-\eqref{OCP_0}. 
\item\label{A3} For any $\mu\neq 0$, $(\mu x(\cdot),\mu y(\cdot),u(\cdot))$ (homothety on $q$) is also an optimal solution of \eqref{OCP}-\eqref{OCP_0}. 
\item\label{A4} $x(\cdot)$ is nontrivial, $\dot x(\cdot)$ is continuous on $[0,T]$ and $\dot x(0)=\dot x(T)$. 
\end{enumerate}
\end{lemma}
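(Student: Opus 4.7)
The plan is to derive the four properties from three structural features of the problem: the linearity and homogeneity of the dynamics in the state $(x,y)$ for any fixed control, the fact that the vector field is autonomous in $t$, and the periodicity/nontriviality constraints \eqref{OCP_term}-\eqref{OCP_>0}. I would treat the items in the order \ref{A1}, \ref{A3}, \ref{A2}, \ref{A4}, since \ref{A1} is needed in the proof of \ref{A4}.

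For \ref{A1}, I would fix the admissible control $u(\cdot)\in L^\infty([0,T])$ and view \eqref{OCP_dyn_x}-\eqref{OCP_dyn_y} as a linear homogeneous Cauchy problem in the state $(x,y)$. The Carathéodory existence-uniqueness theorem for ODEs with bounded measurable coefficients yields uniqueness: if $(x(t_0),y(t_0))=(0,0)$ for some $t_0\in[0,T]$, then $(x,y)\equiv 0$ on $[0,T]$. The nontriviality constraint \eqref{OCP_>0} at $t=0$ forbids this, hence $x(t)^2+y(t)^2>0$ for every $t\in[0,T]$. Property \ref{A3} is then immediate from the same linearity: $(\mu x,\mu y)$ satisfies the same dynamics with the same control $u$, so the cost \eqref{OCP_cost} is unchanged, the periodicity conditions pass to the scaled trajectory, and nontriviality is preserved as long as $\mu\neq 0$.

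For \ref{A2}, the vector field is autonomous, so after extending $(x,y,u)$ to $\R$ by $T$-periodicity, the triple $(x(\cdot+\delta),y(\cdot+\delta),u(\cdot+\delta))$ is again a solution of \eqref{OCP_dyn_x}-\eqref{OCP_dyn_y}-\eqref{OCP_cont_const} on $[0,T]$. The periodicity conditions \eqref{OCP_term} carry over verbatim, the nontriviality constraint is preserved thanks to \ref{A1}, and a change of variable combined with $T$-periodicity yields $\int_0^T u(t+\delta)\,dt=\int_0^T u(s)\,ds$, so the shifted trajectory is admissible and has the same (optimal) cost.

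Finally for \ref{A4}, if $x\equiv 0$ then $y=\dot x\equiv 0$ as well, contradicting \ref{A1}; hence $x(\cdot)$ is nontrivial. Since $\dot y=-u x$ is bounded, $y$ is Lipschitz continuous on $[0,T]$, so $\dot x=y$ is continuous; and $\dot x(0)=y(0)=y(T)=\dot x(T)$ follows directly from \eqref{OCP_term}. I do not expect any real obstacle: every assertion is a direct consequence of linearity, autonomy, and the periodicity constraints, the only mild subtlety being to rely on Carathéodory uniqueness rather than the classical Cauchy–Lipschitz theorem because the control is merely $L^\infty$.
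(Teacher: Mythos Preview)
Your proposal is correct and follows essentially the same approach as the paper: \ref{A1} by uniqueness for the linear Cauchy problem (the paper simply invokes ``Cauchy uniqueness''), \ref{A2} and \ref{A3} from autonomy and linearity/homogeneity with \ref{A1} securing the nontriviality constraint after shifting, and \ref{A4} from $\dot x=y$ together with the continuity and periodicity of $y$. Your explicit mention of Carath\'eodory uniqueness and the Lipschitz continuity of $y$ adds useful precision but does not change the argument.
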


\begin{proof}
If $(x(\cdot),y(\cdot))$ passes through $(0,0)$ then it must remain at $(0,0)$ for every time, by Cauchy uniqueness, which contradicts \eqref{OCP_>0}. This gives \ref{A1}.
Now, \ref{A2} is obtained by using \ref{A1}, and \ref{A3} is obvious. It remains to establish \ref{A4}. By contradiction, if $x(\cdot)=0$, then \eqref{OCP_dyn_x} implies that $y(\cdot)=0$, contradicting \eqref{OCP_>0}. Hence $x(\cdot)$ is nontrivial. The second part of \ref{A4} follows from the facts that $\dot x(t)=y(t)$ by \eqref{OCP_dyn_x} and that $y(\cdot)$ is continuous and $T$-periodic by \eqref{OCP_per}.
\end{proof}

\begin{remark}
According to \ref{A4}, $x(\cdot)$ is $C^1$ and $T$-periodic. 
In contrast, $y(\cdot)$ is not $C^1$ on $[0,T]$. Indeed, $\dot y(t)=-u(x)x(t)$ by \eqref{OCP_dyn_y} and $u$ is not continuous, as it will be proved further.
\end{remark}

\paragraph{First reduction of the problem.} 
Combining \ref{A2}, \ref{A3} and \ref{A4} of Lemma \ref{lem_prelim}, without loss of generality we can replace \eqref{OCP_term} and \eqref{OCP_>0} by 
\begin{equation}\label{OCP_per}
x(0)=x(T)=1, \quad y(0)=y(T).
\end{equation}
Hence, in what follows we consider the optimal control problem \eqref{OCP}-\eqref{OCP_per}.

Note that, at this step, $y(0)=y(T)$ is let free. We will prove further that any solution of the optimal control problem \eqref{OCP}-\eqref{OCP_per} satisfies $y(0)=y(T)=0$, i.e., \eqref{OCP_per_0} is satisfied. 
Further, we will also perform a second reduction to arrive at the state constraint $x(t)\leq 1$ (i.e., half of \eqref{0<x<=1}), and establish the other half of \eqref{0<x<=1}.

\medskip
The proof goes in several steps, by first applying the Pontryagin maximum principle and then establishing various properties. 
It turns out that the proof is far from being easy and does not follow straigthforwardly from the Pontryagin maximum principle, as one could suspect at the first glance. This difficulty is probably due to the existence of too many symmetries and geometric transforms, that make the extremal equations, in some sense, somewhat degenerate. We will even have to resort to the Stokes theorem applied with a nonclassical one-differential form (different from the more classical clock form), thus employing arguments that are of a global nature. This is not so common in the study of optimal control problems.

\subsection{Application of the Pontryagin maximum principle}\label{sec_PMP}
The Hamiltonian of the optimal control problem \eqref{OCP} is
$$
H(x,y,p_x,p_y,p^0,u) = p_xy-p_yxu+p^0u .
$$
Given any optimal solution $(x(\cdot),y(\cdot),u(\cdot))$ of \eqref{OCP}-\eqref{OCP_per} on $[0,T]$, by the Pontryagin maximum principle (see \cite{LeeMarkus, Pontryagin, Trelat_bookSB}), there exist an absolutely continuous \emph{adjoint vector} $(p_x(\cdot),p_y(\cdot))$ on $[0,T]$ and $p^0\leq 0$ such that $(p_x(\cdot),p_y(\cdot),p^0)\neq(0,0,0)$ and
\begin{subequations}\label{adjoint}
\begin{align}
& \dot p_x(t) = u(t)p_y(t) \label{adjoint_x} \\
& \dot p_y(t) = -p_x(t)  \label{adjoint_y}
\end{align}
\end{subequations}
and
\begin{equation}\label{maximization}
H(x(t),y(t),p_x(t),p_y(t),p^0,u(t)) = \max_{\umin\leq v\leq\umax}H(x(t),y(t),p_x(t),p_y(t),p^0,v)
\end{equation}
for almost every $t\in[0,T]$.
Defining the (absolutely continuous) \emph{switching function}
\begin{equation}\label{def_phi}
\varphi(t) = -p_y(t)x(t)+p^0 \qquad\forall t\in[0,T],
\end{equation}
the maximization condition \eqref{maximization} gives
$$
\varphi(t) u(t) = \max_{\umin\leq v\leq\umax} (\varphi(t) v) ,
$$
for almost every $t\in[0,T]$, which yields
\begin{equation}\label{u_extr}
u(t) = \left\{ \begin{array}{ll}
\umin & \textrm{if}\ \ \varphi(t)<0, \\
\umax & \textrm{if}\ \ \varphi(t)>0,
\end{array}\right.
\end{equation}
for almost every $t\in[0,T]$.
At this step, we state nothing on the closed subset $I$ of $[0,T]$ where the continuous function $\varphi$ vanishes identically. This set could be of positive measure and have a complicated structure. Actually, we will prove further in Lemma \ref{lem_nonsing} (Section \ref{sec_nonsing}) that $I$ is of Lebesgue measure zero and thus \eqref{u_extr} is enough to fully describe the optimal control almost everywhere. As a consequence of Lemma \ref{lem_nonsing}, since $\varphi$ is continuous, the optimal control $u(\cdot)$ is \emph{bang-bang}, i.e., the time interval $[0,T]$ is a countable union of open intervals along which either $u(t)=\umin$ or $u(t)=\umax$. But this result is far from being obvious and to prove we will first establish a number of other results.

For now, let us first finish to apply the Pontryagin maximum principle, which also gives the following additional information.
The maximized Hamiltonian defined by
\begin{equation}\label{def_H1}
H_1(x(t),y(t),p_x(t),p_y(t),p^0) 
= p_x(t)y(t) + \max_{\umin\leq v\leq\umax}(v\varphi(t))
\end{equation}
for every $t\in[0,T]$ is constant on $[0,T]$.
Moreover, by the \emph{transversality conditions} of the Pontryagin maximum principle, the periodicity condition $y(0)=y(T)$ (whose value is let free) of \eqref{OCP_per} implies that 
\begin{equation}\label{py_per}
p_y(0)=p_y(T)
\end{equation}
(see \cite[Section 2.2.3]{Trelat_bookSB}), i.e., that $p_y(\cdot)$ is $T$-periodic. Actually, we are going to see that $p_x(\cdot)$ is $T$-periodic as well (see Lemma \ref{lem_per} further).

Recall that $(x(\cdot),y(\cdot), p_x(\cdot),p_y(\cdot), p^0, u(\cdot))$ is called an \emph{extremal lift} of the optimal trajectory $(x(\cdot),y(\cdot),u(\cdot))$. The triple $(p_x(T),p_y(T), p^0)$ is defined up to scaling. The extremal is said to be \emph{normal} if $p^0\neq 0$, and in this case it is usual to normalize it so that $p^0=-1$. It is said to be \emph{abnormal} if $p^0=0$. 

We next exploit the various conditions given by the Pontryagin maximum principle.

\subsection{First properties}
We start with an easy lemma.

\begin{lemma}\label{lem1}
Given any optimal solution $(x(\cdot),y(\cdot),u(\cdot))$ of \eqref{OCP}-\eqref{OCP_per} and given any extremal lift $(x(\cdot),y(\cdot), p_x(\cdot),p_y(\cdot), p^0, u(\cdot))$ of it,
the function $t\mapsto p_x(t)x(t)+p_y(t)y(t)$ is constant on $[0,T]$.
\end{lemma}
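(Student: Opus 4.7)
The plan is to show this by a direct computation: differentiate the function $F(t) = p_x(t)x(t)+p_y(t)y(t)$ and verify that $\dot F(t)=0$ almost everywhere on $[0,T]$, which implies $F$ is constant since $p_x,p_y,x,y$ are all absolutely continuous.

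Applying the product rule and substituting the state equations \eqref{OCP_dyn_x}, \eqref{OCP_dyn_y} together with the adjoint equations \eqref{adjoint_x}, \eqref{adjoint_y}, I would compute
\begin{align*}
\dot F(t) &= \dot p_x(t)\,x(t)+p_x(t)\,\dot x(t)+\dot p_y(t)\,y(t)+p_y(t)\,\dot y(t) \\
&= u(t)p_y(t)\,x(t)+p_x(t)\,y(t)-p_x(t)\,y(t)-p_y(t)\,u(t)\,x(t) \\
&= 0.
\end{align*}
The two pairs of terms $u(t)p_y(t)x(t)$ and $p_x(t)y(t)$ cancel in opposite signs, leaving zero a.e. Absolute continuity of $F$ then yields the claim.

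There is essentially no obstacle here; it is a purely algebraic cancellation that relies only on the specific form of the Hamiltonian system. Conceptually, this conserved quantity is nothing but the value of the Hamiltonian $H$ along the extremal minus the $u$-dependent part that already appears in the switching function $\varphi$: indeed, $H = p_x y + u(-p_y x + p^0) = p_x y + u\varphi$, and the quantity $p_x x + p_y y$ is the Noether-type invariant associated with the scaling symmetry \ref{A3} of Lemma \ref{lem_prelim} (the homothety $(x,y)\mapsto(\mu x,\mu y)$), so its conservation is a structural feature of the problem rather than a coincidence. I would therefore present the computation above as the proof and perhaps append this symmetry interpretation as a remark, since it will likely be useful in combination with the periodicity of $p_y$ noted in \eqref{py_per} to later establish the $T$-periodicity of $p_x$ announced before Lemma \ref{lem1}.
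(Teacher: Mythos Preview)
Your proof is correct and is essentially identical to the paper's own argument: both simply differentiate $p_x x + p_y y$, substitute the state and adjoint equations, and observe the algebraic cancellation. The Noether/scaling-symmetry interpretation you add is a nice remark but not part of the paper's proof.
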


\begin{proof}
The result obviously follows from the fact that
$$
\frac{d}{dt} ( p_x(t)x(t)+p_y(t)y(t) ) = 0 ,
$$
which is inferred from \eqref{OCP_dyn_x}, \eqref{OCP_dyn_y}, \eqref{adjoint_x} and \eqref{adjoint_y}.
\end{proof}

Note that Lemma \ref{lem1} is valid independently on the constraints on the initial and final conditions. 

\begin{lemma}\label{lem_per}
Given any optimal solution $(x(\cdot),y(\cdot),u(\cdot))$ of \eqref{OCP}-\eqref{OCP_per} and given any extremal lift $(x(\cdot),y(\cdot), p_x(\cdot),p_y(\cdot), p^0, u(\cdot))$ of it,
we have 
$$
p_x(0)=p_x(T), \quad p_y(0)=p_y(T) .
$$
In other words, the adjoint vector is $T$-periodic.
\end{lemma}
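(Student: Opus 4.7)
The plan is to derive the $T$-periodicity of $p_x$ as an immediate consequence of Lemma \ref{lem1} together with the already-established transversality condition on $p_y$. The key observation is that the fixed boundary values $x(0)=x(T)=1$ provide no direct transversality information on $p_x$, so one cannot simply read off $p_x(0)=p_x(T)$ from the Pontryagin maximum principle. Instead, one must use a global first integral to relate the two endpoints.

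First, I would recall from Section \ref{sec_PMP} that the periodicity constraint $y(0)=y(T)$ (with common value left free) yields, through the transversality conditions, exactly the identity $p_y(0)=p_y(T)$ stated in \eqref{py_per}. This takes care of the second equality and reduces the lemma to proving $p_x(0)=p_x(T)$.

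Next, I would invoke Lemma \ref{lem1}, which asserts that the function $t\mapsto p_x(t)x(t)+p_y(t)y(t)$ is constant on $[0,T]$. Equating its values at $t=0$ and $t=T$ gives
\[
p_x(0)x(0)+p_y(0)y(0)=p_x(T)x(T)+p_y(T)y(T).
\]
Using the normalization \eqref{OCP_per}, namely $x(0)=x(T)=1$ and $y(0)=y(T)$, together with $p_y(0)=p_y(T)$ just recalled, the two $p_y y$ contributions cancel and the $x$-coefficients both equal $1$. The identity therefore reduces to $p_x(0)=p_x(T)$, concluding the proof.

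There is no substantial obstacle here: the result is a clean bookkeeping consequence of the conservation law \ref{lem1} bridging the two endpoints. The only conceptual point worth emphasizing is that one really needs the first integral to transport information from $t=0$ to $t=T$, because the endpoint constraint $x(0)=x(T)=1$ being of fixed type furnishes no transversality condition on $p_x$ directly.
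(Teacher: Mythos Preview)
Your proof is correct and follows essentially the same route as the paper: use the transversality condition \eqref{py_per} for $p_y$, then evaluate the first integral of Lemma~\ref{lem1} at the two endpoints and cancel using $x(0)=x(T)=1$ and $y(0)=y(T)$. Your added remark that the fixed constraint on $x$ gives no direct transversality information on $p_x$ is a useful clarification but not required for the argument.
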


\begin{proof}
We infer from Lemma \ref{lem1} that
$$
p_x(0)x(0)+p_y(0)y(0) = p_x(T)x(T)+p_y(T)y(T) .
$$
Since $x(0)=x(T)=1$ (by \eqref{OCP_per}) and $p_y(0)=p_y(T)$ (by \eqref{py_per}), the conclusion follows.
\end{proof}

\subsection{Second reduction of the problem}\label{sec_secondreduction}

\begin{lemma}\label{lem_y0}
Shifting in time and using an homothety if necessary, without loss of generality, we can assume that any optimal solution $(x(\cdot),y(\cdot),u(\cdot))$ of \eqref{OCP}-\eqref{OCP_per} satisfies $y(0)=y(T)=0$ and $x(t)\leq 1$ for every $t\in[0,T]$.
\end{lemma}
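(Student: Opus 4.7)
The plan is to exploit the two symmetries of Lemma \ref{lem_prelim}---time translation \ref{A2} and scalar homothety \ref{A3}---in order to normalize the optimal trajectory at the point where $x$ achieves its maximum.

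First, by \ref{A4} the function $x(\cdot)$ is $C^1$, $T$-periodic and nontrivial, so $M:=\max_{t\in[0,T]}x(t)$ is attained at some $t^{\star}\in[0,T]$. A preliminary step is to ensure that $M>0$: in the exceptional case $x\leq 0$ on $[0,T]$, nontriviality forces $\min x<0$, and applying \ref{A3} with $\mu=-1$ flips this into a strictly positive maximum. Once $M>0$, the fact that the periodic extension of $x$ to $\R$ is $C^1$ (this is precisely $\dot x(0)=\dot x(T)$ in \ref{A4}) together with Fermat's rule at the global maximum gives $\dot x(t^{\star})=0$, i.e., $y(t^{\star})=0$ by \eqref{OCP_dyn_x}.

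It then suffices to compose the time shift $\delta=t^{\star}$ from \ref{A2} with the homothety $\mu=1/M$ from \ref{A3}. The resulting trajectory is again optimal, satisfies $x(0)=1$ and $y(0)=0$ by construction, and $x(t)\leq 1$ on $[0,T]$ because the maximum value has been rescaled to $1$. The boundary values $x(T)=1$ and $y(T)=0$ follow from the $T$-periodicity of $x(\cdot)$ and $y(\cdot)$ (item \ref{A4} and \eqref{OCP_per}).

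I do not anticipate any real obstacle: the lemma is essentially a normalization by the symmetries inherent to the problem. The only minor point of care is the preliminary sign argument guaranteeing $M>0$ so that the homothety $\mu=1/M$ makes sense and produces an upper bound of $1$ with the correct sign.
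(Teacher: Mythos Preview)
Your proof is correct and follows essentially the same approach as the paper: shift time to a global maximum of $x$, use Fermat's rule there to get $y=0$, then apply a homothety to normalize. Note only that your preliminary sign argument is superfluous, since the hypothesis \eqref{OCP_per} already imposes $x(0)=1$, so $M\geq 1>0$ automatically.
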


\begin{proof}
Let $(\tilde x(\cdot),\tilde y(\cdot),\tilde u(\cdot))$ be an optimal solution of \eqref{OCP}-\eqref{OCP_per}. Since $\tilde x(0)=\tilde x(T)=1$, $x(\cdot)$ takes positive values. By continuity and compactness, let $t_1\in[0,T]$ be such that 
$$
\tilde x(t_1) = \max_{t\in[0,T]}\tilde x(t) .
$$
Extending $(\tilde x(\cdot),\tilde y(\cdot),\tilde u(\cdot))$ by $T$-periodicity and setting 
$$
x^{t_1}(\cdot) = \tilde x(\cdot+t_1), \quad y^{t_1}(\cdot) = \tilde y(\cdot+t_1), \quad u^{t_1}(t) = \tilde u(\cdot+t_1),
$$
(translation in time), the triple $(x^{t_1}(\cdot),y^{t_1}(\cdot),u^{t_1}(\cdot))$ is an optimal solution of \eqref{OCP}-\eqref{OCP_0} (see \ref{A2} in Lemma \ref{lem_prelim}). By \ref{A1} in Lemma \ref{lem_prelim}, we have $x^{t_1}(0)^2+y^{t_1}(0)^2>0$. Now, we set 
$$
\mu=\frac{1}{\sqrt{x^{t_1}(0)^2+y^{t_1}(0)^2}}
$$
and we define 
$$
x(\cdot) = \mu x^{t_1}(\cdot), \quad y(\cdot) = \mu y^{t_1}(\cdot), \quad u(\cdot) = \mu u^{t_1}(\cdot) .
$$
By \ref{A3} in Lemma \ref{lem_prelim}, $(x(\cdot),y(\cdot),u(\cdot))$ is an optimal solution of \eqref{OCP}-\eqref{OCP_0} satisfying, by construction, $x(0)^2+y(0)^2=1$. Moreover, since $x(0)=\mu\tilde x(t_1)$ is the maximum of $x(t)$ over all possible $t\in[0,T]$, and since $x(\cdot)$ is $C^1$ at $t=0$ (by \ref{A4} in Lemma \ref{lem_prelim}), we infer that $\dot x(0)=0$, hence $y(0)=y(T)=0$. The lemma is proved.
\end{proof}

At this step of our analysis, thanks to Lemma \ref{lem_y0}, in what follows we consider the optimal control problem \eqref{OCP} with the state constraint
\begin{equation}\label{x<=1}
x(t)\leq 1\qquad\forall t\in[0,T],
\end{equation}
and with the terminal conditions \eqref{OCP_per_0} (i.e., with respect to \eqref{OCP_per}, we have moreover $y(0)=y(T)=0$).
We have not obtained yet that any optimal solution of that problem satisfies also $x(t)>0$ for every $t\in[0,T]$, i.e., \eqref{0<x<=1}. This will be established in Lemma \ref{lem_xmin>0} in Section \ref{sec_xmin>0}.

By the previous results and in particular by Lemma \ref{lem_per}, given any optimal solution $(x(\cdot),y(\cdot),u(\cdot))$ of \eqref{OCP}-\eqref{OCP_per_0}-\eqref{x<=1} and given any extremal lift $(x(\cdot),y(\cdot), p_x(\cdot),p_y(\cdot), p^0, u(\cdot))$ of it, we have
\begin{align}
& p_x(t)x(t)+p_y(t)y(t) = \mathrm{Cst} = p_x(0)\qquad\forall t\in[0,T], \label{lem1_plusprecis}\\
& p_x(0)=p_x(T), \quad  p_y(0)=p_y(T) .  \label{p_per}
\end{align}
Moreover, recalling that the maximized Hamiltonian $H_1$ is defined by \eqref{def_H1} and is constant along any extremal, we have
\begin{equation}\label{H1geq0}
H_1(x(t),y(t), p_x(t),p_y(t),p^0) = \mathrm{Cst} \geq 0 ,
\end{equation}
and we denote by $H_1$ this constant (which depends on the extremal).
Indeed, taking $t=0$ and noting that $y(0)=0$, we have
\begin{equation}\label{H1precis}
H_1 = \max_{\umin\leq v\leq\umax}(v\varphi(0))
= \left\{\begin{array}{ll}
\umax\varphi(0) & \textrm{if}\ \ \varphi(0)>0, \\
\umin\varphi(0) &\textrm{if}\ \ \varphi(0)<0, \\
0 &\textrm{if}\ \ \varphi(0)=0 .
\end{array}\right.
\end{equation}

\subsection{Analysis of the periodic trajectory defined in Theorem \ref{main_thm}}

In this section, we consider the trajectory associated with the control $u$ defined by \eqref{u_opt} and starting at $(1,0)$ (we will prove further that this is the actual optimal trajectory of the optimal control problem \eqref{OCP}-\eqref{OCP_per_0}).

So, we temporarily forget the optimal control problem.
We only consider the control system \eqref{OCP_dyn_x}-\eqref{OCP_dyn_y}-\eqref{OCP_cont_const}, with the initial condition $(x(0),y(0))=(1,0)$.

Let $T>0$ and let $t_1 \in \Big(0, \frac{1}{\omax}\mathrm{Arctan}\big(\frac{\omin}{\omax}\big) \Big)$ be arbitrary, and let $u$ be the control defined by \eqref{u_opt}, i.e., $u(t)=\umax$ if $0<t<t_1$, $u(t)=\umin$ if $t_1<t<T-t_1$ and $u(t)=\umax$ if $T-t_1<t<T$.
This control, which satisfies the control constraint \eqref{OCP_cont_const}, generates a unique trajectory $(x(\cdot),y(\cdot))$ solution of \eqref{OCP_dyn_x}-\eqref{OCP_dyn_y} such that $(x(0),y(0))=(1,0)$.

\begin{proposition}\label{prop_loop}
Given any $T>0$, there exists a unique choice of $t_1 \in \Big(0, \frac{1}{\omax}\mathrm{Arctan}\big(\frac{\omin}{\omax}\big) \Big)$ such that the trajectory $(x(\cdot),y(\cdot))$ defined above is $T$-periodic, i.e., satisfies $x(T)=x(0)=1$ and $y(T)=y(0)=0$.
In turn, this trajectory satisfies the equalities \eqref{Tt1}, \eqref{xopt}, \eqref{cost} stated in Theorem \ref{main_thm}, and the various properties stated in Remark \ref{rem1}.
It is drawn on Figure \ref{fig_thm}.
\end{proposition}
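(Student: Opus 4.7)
The proof is essentially a controlled integration of the ODE on each subinterval, plus a time-reversal symmetry argument to enforce the periodicity.

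\emph{Step 1 (explicit integration on each piece).} On $[0,t_1]$, the control $u\equiv\umax=\omax^2$ turns \eqref{OCP_dyn_x}-\eqref{OCP_dyn_y} into $\ddot x=-\omax^2 x$ with $(x(0),\dot x(0))=(x(0),y(0))=(1,0)$. This yields
$$
x(t)=\cos(\omax t),\qquad y(t)=-\omax\sin(\omax t),\qquad t\in[0,t_1],
$$
so at $t=t_1$ one has $x(t_1)=\cos(\omax t_1)$, $y(t_1)=-\omax\sin(\omax t_1)$. On $[t_1,T-t_1]$, $u\equiv\umin=-\omin^2$ yields $\ddot x=\omin^2 x$, hence $x(t)=Ae^{\omin(t-t_1)}+Be^{-\omin(t-t_1)}$. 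Matching $x$ and $\dot x=y$ at $t=t_1$ fixes
$$
A=\tfrac12\!\left(\cos(\omax t_1)-\tfrac{\omax}{\omin}\sin(\omax t_1)\right),\qquad B=\tfrac12\!\left(\cos(\omax t_1)+\tfrac{\omax}{\omin}\sin(\omax t_1)\right),
$$
which is exactly the middle branch of \eqref{xopt}. The last piece on $[T-t_1,T]$ is handled in Step 2.

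\emph{Step 2 (reduction to $y(T/2)=0$ by symmetry).} The control \eqref{u_opt} satisfies $u(T-t)=u(t)$, and the vector field is invariant under $(x,y,t)\mapsto(x,-y,T-t)$. Therefore, if $y(T/2)=0$, the reflected curve $t\mapsto(x(T-t),-y(T-t))$ solves the same Cauchy problem on $[T/2,T]$ as $(x,y)$, so by uniqueness $x(T-t)=x(t)$ and $y(T-t)=-y(t)$. In particular, $(x(T),y(T))=(x(0),-y(0))=(1,0)$, and the third branch of \eqref{xopt} follows from $x(T-t)=x(t)=\cos(\omax(T-t))$ on $[T-t_1,T]$. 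Conversely, $T$-periodicity forces $x$ to attain its maximum $1$ at $t=0$, hence $y(0)=\dot x(0)=0$; by the same symmetry argument applied around $T/2$, this forces $y(T/2)=0$. Thus $T$-periodicity is equivalent to $y(T/2)=0$.

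\emph{Step 3 (the relation \eqref{Tt1}).} Using the middle-branch formula,
$$
\dot x\!\left(\tfrac{T}{2}\right)=\omin\!\left(Ae^{\omin(T/2-t_1)}-Be^{-\omin(T/2-t_1)}\right)=0
$$
is equivalent to $e^{\omin(T-2t_1)}=B/A$. Since $A,B$ are both positive precisely when $\frac{\omax}{\omin}\tan(\omax t_1)<1$, i.e.\ $t_1<\frac{1}{\omax}\mathrm{Arctan}(\omin/\omax)$, taking logarithms on that range yields
$$
T-2t_1=\frac{1}{\omin}\ln\!\left(\frac{1+\frac{\omax}{\omin}\tan(\omax t_1)}{1-\frac{\omax}{\omin}\tan(\omax t_1)}\right),
$$
which is \eqref{Tt1}. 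The right-hand side, viewed as a function of $t_1$ on $(0,\frac{1}{\omax}\mathrm{Arctan}(\omin/\omax))$, tends to $0$ as $t_1\to 0^+$ and to $+\infty$ as $t_1$ approaches the endpoint (the logarithm blows up), and its derivative is computed to be strictly positive (this is the content of Lemma \ref{lem_Tt1} to be invoked); hence the map $t_1\mapsto T$ is a bijection onto $(0,+\infty)$, giving existence and uniqueness of $t_1$.

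\emph{Step 4 (cost and the properties of Remark \ref{rem1}).} Since $u$ is piecewise constant, a direct integration gives $\int_0^T u\,dt=2t_1\umax+(T-2t_1)\umin$, and substituting $\umax=\omax^2$, $\umin=-\omin^2$ and the expression for $T-2t_1$ from \eqref{Tt1} yields \eqref{cost}. Negativity of the cost reduces, after setting $s=\omax t_1$ and $r=\omax/\omin$, to $\tanh^{-1}(r\tan s)>rs$, which follows from the chain $\tanh^{-1}(z)>z$ for $z>0$ and $\tan s>s$ for $s\in(0,\pi/2)$. The ellipse identity $x^2+y^2/\omax^2=1$ on $[0,t_1]\cup[T-t_1,T]$ is immediate from $x=\cos(\omax t)$, $y=-\omax\sin(\omax t)$, and the hyperbola identity $x^2-y^2/\omin^2=4AB=c(t_1)$ follows from the middle-branch formulas; the bound $t_1<\frac{1}{\omax}\mathrm{Arctan}(\omin/\omax)$ yields $c(t_1)>0$, whence $x(t)>0$ throughout $[t_1,T-t_1]$. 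The asymptotics in Remark \ref{rem1} follow by letting $\omax t_1\to\mathrm{Arctan}(\omin/\omax)$ in these explicit expressions.

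\emph{Main obstacle.} There is no serious obstacle — the whole proposition is an explicit integration and a symmetry identification. The only slightly delicate point is the monotonicity of $T$ as a function of $t_1$, and I would simply defer that to the cited Lemma \ref{lem_Tt1}; alternatively, one can differentiate \eqref{Tt1} and check positivity directly, the computation being elementary but a bit tedious.
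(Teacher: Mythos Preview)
Your proof is correct and follows essentially the same route as the paper's: explicit integration on each piece, reduction of periodicity to $y(T/2)=0$ via the time-reversal symmetry $u(T-t)=u(t)$, derivation of \eqref{Tt1} from $e^{\omin(T-2t_1)}=B/A$, and monotonicity via Lemma~\ref{lem_Tt1}. The one noteworthy difference is your argument for the negativity of the cost: the paper differentiates \eqref{cost} in $t_1$ and checks the sign, whereas your inequality $\tanh^{-1}(r\tan s)>r\tan s>rs$ (from $\tanh^{-1}(z)>z$ and $\tan s>s$) is a cleaner static estimate that avoids that computation.
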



\begin{proof}
When $t\in(0,t_1)$, we have $u(t)=\umax=\omax^2$ (see \eqref{def_omega}) and integrating \eqref{OCP_dyn_x} with $x(0)=1$ yields $x(t)=\cos(\omax t)$, i.e., the first part of \eqref{xopt}. Along this interval, the curve $(x(\cdot),y(\cdot))$ follows the ellipse of equation $x^2+\frac{y^2}{\omax^2}=1$.
Note that, since $\tan(\omax t_1)<\frac{\omin}{\omax}$, we have
$-\omax\sin(\omax t_1) > -\omin\cos(\omax t_1)$.

\medskip
When $t\in(t_1,T-t_1)$, we have $u(t)=\umin=-\omin^2$ and integrating \eqref{OCP_dyn_x} with $x(t_1)=\cos(\omax t_1)$ yields 
$$
x(t) = A e^{\omin t} + B e^{-\omin t}.
$$
Since $x(t_1)=\cos(\omax t_1)$ and $\dot x(t_1)=-\omax\sin(\omax t_1)$, we infer that
\begin{align*}
A &= \frac{1}{2}\Big( \cos(\omax t_1) - \frac{\omax}{\omin}\sin(\omax t_1) \Big) e^{-\omin t_1}, \\
B &= \frac{1}{2}\Big( \cos(\omax t_1) + \frac{\omax}{\omin}\sin(\omax t_1) \Big) e^{\omin t_1}.
\end{align*}
This gives the second part of \eqref{xopt}. Note that $A>0$. There, the curve $(x(\cdot),y(\cdot))$ follows a hyperbole of equation $x^2-\frac{y^2}{\omin^2} = c(t_1)$ where $c(t_1)=\cos^2(\omax t_1)\Big( 1-\frac{\omax^2}{\omin^2}\tan^2(\omax t_1)\Big)$.

Let us prove that there exists a unique choice of $t_1 \in \Big(0, \frac{1}{\omax}\mathrm{Arctan}\big(\frac{\omin}{\omax}\big) \Big)$ such that this curve crosses the $x$-axis exactly at time $\frac{T}{2}$, i.e., such that $y(\frac{T}{2})=0$.
The latter equality is satisfied if and only if $Ae^{\omin\frac{T}{2}}-Be^{-\omin\frac{T}{2}}=0$, i.e., $e^{\omin T}=\frac{B}{A}$, which leads to the formula \eqref{Tt1} giving $T$ in function of $t_1$. 
Hence, the claim is true if we can prove that the function $t_1\mapsto T(t_1)$ is bijective. This indeed follows from the lemma below.

\begin{lemma}\label{lem_Tt1}
The function $t_1\mapsto T(t_1)$, defined by \eqref{Tt1}, is increasing.
\end{lemma}

\begin{proof}[Proof of Lemma \ref{lem_Tt1}.]
A computation shows that
$$
\frac{dT}{dt_1} = 2\left(1+\frac{\omax^2}{\omin^2}\right) \frac{1}{1-\frac{\omax^2}{\omin^2}\tan^2(\omax t_1)}
$$
and this quantity is positive since $\frac{\omax}{\omin}\tan(\omax t_1)<1$, because $t_1 \in \Big(0, \frac{1}{\omax}\mathrm{Arctan}\big(\frac{\omin}{\omax}\big) \Big)$.
\end{proof}

Therefore, at this step we have proved that there is a unique choice for $t_1$ such that $y(\frac{T}{2})=0$.
Then, completing the construction of the curve on $[\frac{T}{2},T] = [\frac{T}{2},T-t_1] \cup [T-t_1,T]$ gives a trajectory that is symmetric with respect to the $x$-axis. This proves that $x(T)=x(0)=1$ and $y(T)=y(0)=0$.

The cost $\int_0^T u(t)\, dt$ of that trajectory is equal to \eqref{cost}. It is however a nontrivial fact that this cost is always negative. This fact follows from the fact that
\begin{multline*}
\frac{d}{dt_1}\left( -\omin\ln \left( \frac{1+\frac{\omax}{\omin}\tan(\omax t_1)}{1-\frac{\omax}{\omin}\tan(\omax t_1)} \right) + 2 \omax^2 t_1 \right) \\
=
-2 \frac{\omax^2}{\omin^2} (\omin^2+\omax^2) \frac{\tan^2(\omax t_1)}{1-\frac{\omax^2}{\omin^2}\tan^2(\omax t_1)}
\end{multline*}
and this quantity is negative since $\frac{\omax}{\omin}\tan(\omax t_1)<1$, because $t_1 \in \Big(0, \frac{1}{\omax}\mathrm{Arctan}\big(\frac{\omin}{\omax}\big) \Big)$.
Therefore the function that is derivated above is decreasing, and since it is equal to $0$ when $t_1=0$, it is always negative on the open interval $\Big(0, \frac{1}{\omax}\mathrm{Arctan}\big(\frac{\omin}{\omax}\big) \Big)$.

\medskip

The various limits stated in Remark \ref{rem1} are now easily checked.
To get the equivalent of $x(t)$ on $\big[t_1,\frac{T}{2}\big]$, we note that $A=Be^{-\omin T}$ and thus $x(t) = B ( e^{\omin(t-T)}+e^{-\omin t} )$, and we compute the limit of $B$ as $T\rightarrow+\infty$.
\end{proof}

Again, at this step we do not know yet that the trajectory $(x(\cdot),y(\cdot),u(\cdot))$ defined above is the optimal solution of \eqref{OCP}-\eqref{OCP_per}. But, at least, it is an admissible solution, i.e., it satisfies \eqref{OCP_dyn_x}-\eqref{OCP_dyn_y}-\eqref{OCP_cont_const} and $x(0)=x(T)=1$ and $y(0)=y(T)=0$.

\begin{corollary}\label{cor_loop}
Let $s_1,s_2, T$ be arbitrary real numbers such that $0\leq s_1<s_2\leq T$, and let $\beta>0$ be arbitrary.
There exists a trajectory $(x_{s_1,s_2}(\cdot),y_{s_1,s_2}(\cdot),u_{s_1,s_2}(\cdot))$, solution of \eqref{OCP_dyn_x}-\eqref{OCP_dyn_y}-\eqref{OCP_cont_const} on $[s_1,s_2]$, such that $x_{s_1,s_2}(s_1)=x_{s_1,s_2}(s_2)=\beta$ and $y_{s_1,s_2}(s_1)=y_{s_1,s_2}(s_2)=0$, of negative cost, i.e.,
\begin{equation}\label{cost_s1s2}
\int_{s_1}^{s_2} u_{s_1,s_2}(t)\, dt  <0,
\end{equation}
and such that $x_{s_1,s_2}(t)\leq\beta$ for every $t\in[s_1,s_2]$.
\end{corollary}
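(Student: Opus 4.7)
\textbf{Proof proposal for Corollary \ref{cor_loop}.}
The plan is to produce the desired loop on $[s_1,s_2]$ simply by rescaling and time-shifting the periodic trajectory built in Proposition \ref{prop_loop}, whose period can be chosen arbitrarily. All the work has effectively been done there; the only thing left to verify is the state constraint $x_{s_1,s_2}(t)\leq\beta$.

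First I would apply Proposition \ref{prop_loop} with the \emph{period} $T':=s_2-s_1>0$: this yields a unique $t_1\in\bigl(0,\tfrac{1}{\omax}\mathrm{Arctan}(\omin/\omax)\bigr)$ and a trajectory $(\bar x(\cdot),\bar y(\cdot),\bar u(\cdot))$ defined on $[0,T']$, solution of \eqref{OCP_dyn_x}-\eqref{OCP_dyn_y}-\eqref{OCP_cont_const}, satisfying $\bar x(0)=\bar x(T')=1$, $\bar y(0)=\bar y(T')=0$, and with $\int_0^{T'}\bar u(t)\,dt<0$ by the negativity statement in \eqref{cost} established in Proposition \ref{prop_loop}. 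I would then set, for $t\in[s_1,s_2]$,
\[
x_{s_1,s_2}(t)=\beta\,\bar x(t-s_1),\qquad y_{s_1,s_2}(t)=\beta\,\bar y(t-s_1),\qquad u_{s_1,s_2}(t)=\bar u(t-s_1).
\]
The system \eqref{OCP_dyn_x}-\eqref{OCP_dyn_y} is linear in $(x,y)$ at fixed control (this is exactly property \ref{A3} of Lemma \ref{lem_prelim}), and the control constraint \eqref{OCP_cont_const} is invariant under the time shift, so $(x_{s_1,s_2},y_{s_1,s_2},u_{s_1,s_2})$ indeed solves \eqref{OCP_dyn_x}-\eqref{OCP_dyn_y}-\eqref{OCP_cont_const} on $[s_1,s_2]$. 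The boundary conditions $x_{s_1,s_2}(s_1)=x_{s_1,s_2}(s_2)=\beta$ and $y_{s_1,s_2}(s_1)=y_{s_1,s_2}(s_2)=0$ hold by construction, and the cost condition \eqref{cost_s1s2} follows from $\int_{s_1}^{s_2}u_{s_1,s_2}(t)\,dt=\int_0^{T'}\bar u(s)\,ds<0$.

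It remains to check that $\bar x(t)\leq 1$ on $[0,T']$, for then $x_{s_1,s_2}(t)=\beta\bar x(t-s_1)\leq\beta$. I would read this off the explicit formula \eqref{xopt}: on $[0,t_1]\cup[T'-t_1,T']$ one has $\bar x(t)=\cos(\omax(\cdot))\leq 1$, with equality only at the endpoints; on the middle arc $[t_1,T'-t_1]$ the function $\bar x$ has the form $Ae^{\omin(t-t_1)}+Be^{-\omin(t-t_1)}$ with $A,B>0$ (since $\frac{\omax}{\omin}\tan(\omax t_1)<1$), hence is strictly convex, attains its endpoint values $\cos(\omax t_1)<1$ at $t=t_1$ and $t=T'-t_1$, and in between stays strictly below $\cos(\omax t_1)$, reaching its minimum $\sqrt{c(t_1)}>0$ at $T'/2$ (as recorded in Remark \ref{rem1}). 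In particular $\bar x(t)\leq 1$ throughout $[0,T']$, which concludes the proof.

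There is no real obstacle here: the statement is a packaging of Proposition \ref{prop_loop} via the affine invariances already recorded in Lemma \ref{lem_prelim} (time shift \ref{A2} and homothety \ref{A3}), with the inequality $\bar x\leq 1$ following directly from the piecewise formula \eqref{xopt}. The only mild care is to keep in mind that one rescales $(x,y)$ by $\beta$ but leaves $u$ unchanged, because the control appears as a coefficient in the linear system.
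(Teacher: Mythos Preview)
Your proof is correct and follows essentially the same approach as the paper: apply Proposition~\ref{prop_loop} with period $s_2-s_1$, then time-shift and apply the homothety by $\beta$ (using \ref{A2} and \ref{A3} of Lemma~\ref{lem_prelim}). The paper's proof is terser and simply invokes the properties already established in Proposition~\ref{prop_loop} and Remark~\ref{rem1} (in particular that $x(0)=1$ is the maximal value of $x$), whereas you spell out the verification of $\bar x\leq 1$ directly from the piecewise formula~\eqref{xopt}, which is fine.
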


\begin{proof}
We apply Proposition \ref{prop_loop} with $T=s_2-s_1$, then we shift in time to get a trajectory $(x_{s_1,s_2}(\cdot),y_{s_1,s_2}(\cdot),u_{s_1,s_2}(\cdot))$ solution of \eqref{OCP_dyn_x}-\eqref{OCP_dyn_y}-\eqref{OCP_cont_const} on $[s_1,s_2]$, with $x_{s_1,s_2}(s_1)=x_{s_1,s_2}(s_2)=1$ and $y_{s_1,s_2}(s_1)=y_{s_1,s_2}(s_2)=0$. Then, using an homothety argument (like in \ref{A3} in Lemma \ref{lem_prelim}), we modify the trajectory (but not the control) so that $x_{s_1,s_2}(s_1)=x_{s_1,s_2}(s_2)=\beta$ and $y_{s_1,s_2}(s_1)=y_{s_1,s_2}(s_2)=0$. The computations done in the proof of Proposition \ref{prop_loop}, in order to prove that the cost \eqref{cost} is negative, show that \eqref{cost_s1s2} holds true.
\end{proof}

Corollary \ref{cor_loop} shows that we can always create a periodic trajectory, with an arbitrarily small period, solution of \eqref{OCP_dyn_x}-\eqref{OCP_dyn_y}-\eqref{OCP_cont_const} and making a loop from $(\beta,0)$ to $(\beta,0)$, for any $\beta>0$, with a negative cost. This nontrivial fact will be useful further. 

\subsection{Optimal trajectories are contained in $0<x\leq 1$}\label{sec_xmin>0}

\begin{lemma}\label{lem_xmin>0}
Any optimal solution $(x(\cdot),y(\cdot),u(\cdot))$ of \eqref{OCP}-\eqref{OCP_per_0}-\eqref{x<=1} satisfies $0<x(t)\leq 1$ for every $t\in[0,T]$.
Hence, the optimal control problem \eqref{OCP}-\eqref{OCP_per_0}-\eqref{0<x<=1} is equivalent to the optimal control problem \eqref{OCP}-\eqref{OCP_per_0}-\eqref{x<=1}.
\end{lemma}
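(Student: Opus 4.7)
The plan is a proof by contradiction. Assume there exists an optimal $(x,y,u)$ of \eqref{OCP}-\eqref{OCP_per_0}-\eqref{x<=1} with $x(t^\ast)\le 0$ at some $t^\ast\in(0,T)$.

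First I will establish that $x$ crosses the axis $\{x=0\}$ transversally. By \ref{A1} of Lemma~\ref{lem_prelim}, one cannot have $(x,y)=(0,0)$; combined with $\dot x=y$, a tangential zero ($x=0$ and $\dot x=0$) would force exactly this, contradiction. Since $x(0)=x(T)=1>0$ and $x$ attains a nonpositive value, continuity then yields $0<\tau_1<\tau_2<T$ with $x(\tau_1)=x(\tau_2)=0$, $x<0$ on $(\tau_1,\tau_2)$ (taking the first such excursion), and, from one-sided signs of $\dot x=y$, $y(\tau_1)<0$ and $y(\tau_2)>0$. In particular $x>0$ on $[0,\tau_1)\cup(\tau_2,T]$.

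Second, I will construct a strictly cheaper admissible competitor. Let $s_1\in[0,\tau_1)$ be the largest time with $y(s_1)=0$ (existing since $y(0)=0$), and $s_2\in(\tau_2,T]$ the smallest time with $y(s_2)=0$; then the trajectory visits $(\beta_1,0)$ and $(\beta_2,0)$ at $s_1$ and $s_2$ with $\beta_1,\beta_2\in(0,1]$. Using \ref{A3} (homothety) on the portion $[s_2,T]$ I normalize so that $\beta_1=\beta_2=:\beta$, and then on $[s_1,s_2]$ I replace the original segment (which dips through $\{x\le 0\}$) by a Corollary~\ref{cor_loop} loop of period $s_2-s_1$ and amplitude $\beta$. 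By construction this loop begins and ends at $(\beta,0)$ with $\dot x=y=0$ at both ends, so the glued trajectory is $C^1$, $T$-periodic, admissible, stays in $0<\tilde x\le 1$, and differs from $(x,y,u)$ only on $[s_1,s_2]$.

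The remaining and main obstacle is the cost comparison: one must show that the original's contribution $\int_{s_1}^{s_2}u\,dt$, accumulated along an excursion that winds into the left half-plane, is strictly greater than the (strictly negative) cost of the substituted Corollary~\ref{cor_loop} loop. The natural route is via Stokes' theorem for the $1$-form $\omega=-dy/x$, which satisfies $\omega=u\,dt$ along trajectories with $d\omega=x^{-2}\,dx\wedge dy$, but which is singular on $\{x=0\}$; a regularization argument shows that the excursion of the original through $\{x\le 0\}$ picks up a strictly positive ``residue-like'' contribution (corresponding to a nonzero winding of the phase-plane loop around the origin) that exceeds the negative cost of any admissible loop of the same duration staying in $\{x>0\}$. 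This is precisely the kind of nonclassical Stokes argument announced in Section~\ref{sec_firstreduction}. A secondary technical point is handling the $\beta_1\neq\beta_2$ mismatch cleanly; this can be done by inserting additional Corollary~\ref{cor_loop} loops of arbitrarily small period (hence arbitrarily small $|$cost$|$), exploiting the ``arbitrarily small period'' feature highlighted after Corollary~\ref{cor_loop}.
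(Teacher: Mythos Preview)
Your approach has two genuine gaps. First, the endpoint mismatch is not resolved. You cannot ``apply a homothety on the portion $[s_2,T]$'' to force $\beta_1=\beta_2$: the homothety in \ref{A3} acts on the entire trajectory; rescaling only the tail makes the curve discontinuous at $s_2$ and destroys the terminal condition $x(T)=1$. Your fallback of inserting extra Corollary~\ref{cor_loop} loops does not help either, since each such loop starts and ends at the \emph{same} point $(\beta,0)$ and therefore cannot bridge a gap between distinct values $\beta_1\ne\beta_2$. Second, and more seriously, the cost comparison is only asserted, not proved. The one-form $-dy/x$ is singular precisely on $\{x=0\}$, which the original arc crosses twice; invoking a ``regularization argument'' and a ``residue-like contribution'' without carrying it out is not a proof. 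You would have to show that the singular contribution has the right sign and dominates the regular part, and it is not at all clear that the strict inequality you need (original cost on $[s_1,s_2]$ strictly exceeds the negative cost of the Corollary~\ref{cor_loop} loop) follows. Incidentally, the ``nonclassical Stokes argument'' you allude to is used in the paper for the \emph{uniqueness} proof (Proposition~\ref{prop_unique}), not for this lemma.

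The paper's proof takes a completely different route that sidesteps both issues; it does not use Corollary~\ref{cor_loop} here at all. It exploits the hyperbolas $\mathcal H_c=\{x^2-y^2/\omega_{\min}^2=c\}$, which are the integral curves of the $u\equiv u_{\min}$ flow in $\{x>0\}$. Any trajectory with $x_{\min}\le 0$ intersects every $\mathcal H_c$, $c\in(0,1]$, at two points $(x(s_1),y(s_1))$ (with $y<0$) and $(x(s_2),y(s_2))$ (with $y>0$); the hyperbolic travel time $\delta(c)$ between them ranges continuously from $0$ (at $c=1$) to $+\infty$ (as $c\to 0$), so by the intermediate value theorem one can pick $c$ with $\delta(c)=s_2-s_1$. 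Splicing in this hyperbolic arc on $[s_1,s_2]$ matches the endpoints \emph{and} the elapsed time by construction, and is strictly cheaper simply because $u=u_{\min}$ along it while the original cannot have $u\equiv u_{\min}$ on $[s_1,s_2]$ (else it would itself follow $\mathcal H_c$ and never enter $x\le 0$). No differential forms, no endpoint surgery.
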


\begin{proof}
Let $(x(\cdot),y(\cdot),u(\cdot))$ be an optimal solution of \eqref{OCP}-\eqref{OCP_per_0}. We already know that $x(t)\leq 1$ for every $t\in[0,T]$.
By continuity and compactness, we can define
$$
\xmin = \min_{t\in[0,T]}x(t)=x(\tmin)
$$
for some $\tmin\in(0,T)$. Since $\dot x(t)=y(t)$, we must have $y(\tmin)=0$.
We are going to prove that $\xmin>0$.

Let us consider the family of hyperboles 
\begin{equation*}
\mathcal{H}_c = \left\{ (x,y)\in\R^2\ \mid\ x>0,\ \ x^2-\frac{y^2}{\omin^2} = c \right\}
\end{equation*}
indexed by $c>0$.
Recall that, when $c=c(t_1)=\cos^2(\omax t_1)\Big( 1-\frac{\omax^2}{\omin^2}\tan^2(\omax t_1)\Big)>0$, the hyperbole $\mathcal{H}_c$ contains the trajectory constructed in Proposition \ref{prop_loop}, drawn on Figure \ref{fig_thm}, restricted to $[t_1,T-t_1]$. But now we consider the whole family of hyperboles $(\mathcal{H}_c)_{c>0}$.
We make two useful remarks:
\begin{enumerate}[label=$\bf (H_{\theenumi})$]
\item\label{H1} The hyperboles $\mathcal{H}_c$ are invariant under the dynamics \eqref{OCP_dyn_x}-\eqref{OCP_dyn_y} with $u=\umin$. More precisely, for all $c>0$, $x_0>0$ and $y_0\in\R$ such that $(x_0,y_0)\in\mathcal{H}_c$, the unique solution of the control system \eqref{OCP_dyn_x}-\eqref{OCP_dyn_y}-\eqref{OCP_cont_const} with $u(t)=\umin$ and $x(0)=x_0$, $y(0)=y_0$, satisfies $(x(t),y(t))\in\mathcal{H}_c$ for every $t\in\R$. 
\item\label{H2} Let $y_0<0$ and $y_1>0$ be arbitrary. Given any $T>0$, there exist a unique $c>0$ and a unique $x_0>0$ such that $(x_0,y_0)\in\mathcal{H}_c$ and such that the unique solution of the control system \eqref{OCP_dyn_x}-\eqref{OCP_dyn_y}-\eqref{OCP_cont_const} with $u(t)=\umin$ and $x(0)=x_0$, $y(0)=y_0$, satisfies $y(T)=y_1$. Moreover, $c\rightarrow 0$ if $T\rightarrow +\infty$. 
\end{enumerate}
These preliminary remarks being done, to prove the lemma, we argue by contradiction. Let us assume that $\xmin\leq 0$. 

The curve $(x(\cdot),y(\cdot))$ is periodic, contained in the strip $\xmin\leq x\leq 1$, and satisfies $(x(0),y(0))=(x(T),y(T))=(1,0)$ and $(x(\tmin),y(\tmin))=(\xmin,0)$. 
Moreover, since $\dot x(t)=y(t)$, the curve $t\mapsto x(t)$ is decreasing in the half-plane $y<0$, and increasing in the half-plane $y>0$. It turns clockwise as $t\in[0,T]$. 

\begin{figure}[h]
\centerline{\includegraphics[width=9cm]{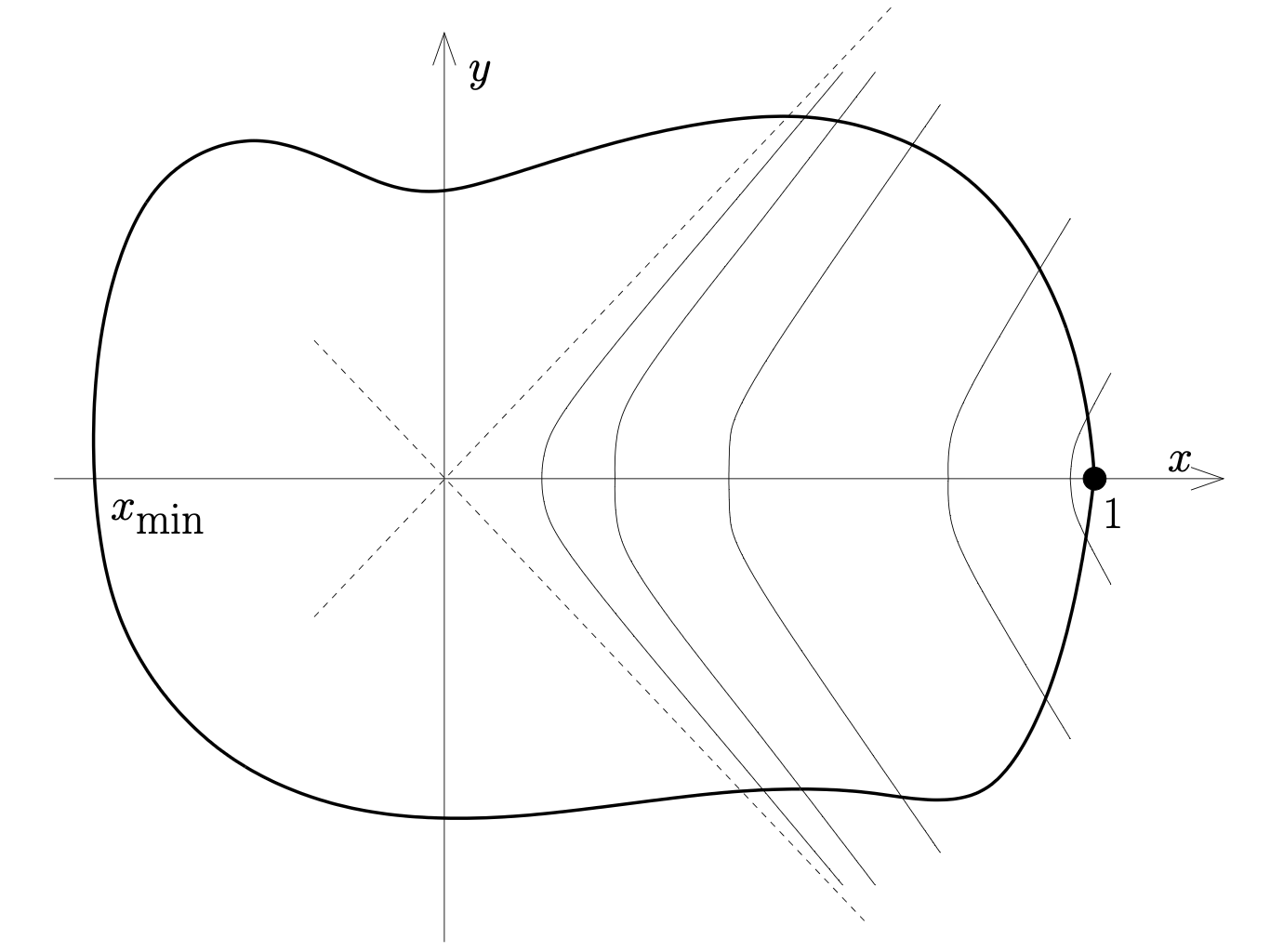}}
\caption{Illustration of the curve intersecting hyperboles.}\label{fig_hyperboles}
\end{figure}

This curve intersects each hyperbole $\mathcal{H}_c$, for every $c\in(0,1]$ (see Figure \ref{fig_hyperboles}).
For every $c\in(0,1]$, choose two points of intersection $(x(s_1),y(s_1))$ and $(x(s_2),y(s_2))$ with $\mathcal{H}_c$ such that $x(s_1)>0$, $y(s_1)<0$ (and $0<s_1<\tmin$) and $x(s_2)>0$, $y(s_2)>0$ (and $\tmin<s_2<T$). 
Then, by Remark \ref{H1}, the trajectory solution of the control system \eqref{OCP_dyn_x}-\eqref{OCP_dyn_y}-\eqref{OCP_cont_const} with $u(t)=\umin$, starting at $(x(s_1),y(s_1))$, follows the hyperbole $\mathcal{H}_c$ and reaches $(x(s_2),y(s_2))$ in a time $\delta>0$ that ranges continuously from $0$ when $c=1$ to $+\infty$ when $c\rightarrow 0$ (by Remark \ref{H2}). Moreover, this hyperbolic arc is obviously optimal for the cost $\int_0^\delta u$, since $u=\umin$ along this arc. 
Therefore, there exists $c\in(0,1]$ such that $\delta=s_2-s_1$. Hence, we have obtained a trajectory consisting of $(x(\cdot),y(\cdot),u(\cdot))$ on $[0,s_1)$, then of the hyperbolic trajectory (with $u=\umin$) on $[s_1,s_2]$ that arrives exactly at $(x(s_2),y(s_2))$ at time $s_2$, then again of $(x(\cdot),y(\cdot),u(\cdot))$ on $(s_2,T]$. 

This new trajectory, obtained by concatenation, has a lower cost than the initial trajectory $(x(\cdot),y(\cdot),u(\cdot))$, since
$$
\int_0^{s_1} u(t)\, dt + (s_2-s_1)\umin + \int_{s_2}^T u(t)\, dt
< 
\int_0^{s_1} u(t)\, dt + \int_{s_1}^{s_2} u(t)\, dt + \int_{s_2}^T u(t)\, dt
$$
unless $u(t)=\umin$ on $[s_1,s_2]$, which is impossible because otherwise the trajectory would follow the hyperbole along this interval and would not penetrate the region $x<0$, while we have assumed that $\xmin<0$.
But then, we have reached a contradiction, because $(x(\cdot),y(\cdot),u(\cdot))$ is optimal.
The lemma is proved.
\end{proof}

\subsection{Optimal controls are bang-bang}\label{sec_nonsing}
As said in Section \ref{sec_PMP}, the maximization condition of the Pontryagin maximum principle has led to \eqref{u_extr}, i.e., $u(t)=\umin$ if $\varphi(t)<0$ and $u(t)=\umax$ if $\varphi(t)>0$, where $\varphi(t) = -p_y(t)x(t)+p^0$ is the switching function defined by \eqref{def_phi}, but we said nothing on the possible value of $u(t)$ on the closed set 
\begin{equation}\label{def_I}
I=\{t\in[0,T]\ \mid\ \varphi(t)=0\}
\end{equation}
where the continuous function $\varphi$ vanishes identically. 
Of course, \eqref{u_extr} is enough if $I$ is of zero Lebesgue measure. However, it could be that $I$ be of positive Lebesgue measure and have a complicated structure.
The next lemma shows that, fortunately, this complicated situation does not occur.
The proof is however not straightforward. 


\begin{lemma}\label{lem_nonsing}
Let $(x(\cdot),y(\cdot),u(\cdot))$ be any optimal solution of \eqref{OCP}-\eqref{OCP_per_0}-\eqref{0<x<=1} and let $I$ be defined by \eqref{def_I}.
Then $I$ is of zero Lebesgue measure, and thus the optimal control $u(\cdot)$ is bang-bang, fully described by \eqref{u_extr}.
\end{lemma}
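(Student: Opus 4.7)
The plan is to argue by contradiction: assume $|I|>0$ and derive a contradiction by combining the PMP relations with the geometric constraints established so far.

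I would first observe that $\varphi=-p_yx+p^0$ is $C^1$ on $[0,T]$ (since $\dot p_y=-p_x$ is continuous and $x$ is $C^1$ by \ref{A4}), so at every Lebesgue density point $\tau$ of the positive-measure set $I$, both $\varphi(\tau)=0$ and $\dot\varphi(\tau)=0$. Using $\dot\varphi=p_xx-p_yy$, the abnormal case $p^0=0$ is dispatched quickly: $\varphi(\tau)=0$ gives $p_y(\tau)=0$, then $\dot\varphi(\tau)=0$ gives $p_x(\tau)=0$, so by linear Cauchy--Lipschitz on the adjoint system $(p_x,p_y)\equiv 0$, contradicting nontriviality; thus $p^0=-1$. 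Pushing one more order, since $\dot\varphi$ is absolutely continuous $\ddot\varphi$ exists a.e.\ and vanishes a.e.\ on $I$; a direct computation gives $\ddot\varphi=2(up_yx+p_xy)$, and combined with $p_yx=-1$ (from $\varphi=0$) and $p_x=-y/x^2$ (from $\dot\varphi=0$), this forces the singular feedback $u=-y^2/x^2$ a.e.\ on $I$ and $H_1=p_xy=-y^2/x^2\le 0$. But $H_1$ is constant and $\ge 0$ by \eqref{H1geq0}, so $H_1\equiv 0$ and $y=0$ at a.e.\ density point of $I$.

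The next step is to globalize these pointwise relations. From $y(\tau)=0$ and $p_x(\tau)=0$ at density points, Lemma~\ref{lem1} upgrades $K_1:=p_xx+p_yy$ to be identically zero on $[0,T]$. Substituting $p_x=-p_yy/x$ into $\dot p_y=-p_x$ yields $\dot p_y/p_y=\dot x/x$, which integrates to
\[
p_y(t)=c\,x(t),\qquad p_x(t)=-c\,y(t)
\]
for some constant $c\neq 0$ (the case $c=0$ gives $\varphi\equiv -1$, so $I=\emptyset$, contradicting $|I|>0$). Hence $\varphi(t)=-c\,x(t)^2-1$ depends only on $x(t)$, which forces $c<0$; writing $c_1=1/\sqrt{-c}\in(0,1]$, I obtain $I=\{t\in[0,T]:x(t)=c_1\}$. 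Finally, $H_1\equiv 0$ combined with $H_1=p_xy=-cy^2$ on $I$ upgrades $y=0$ from a.e.\ density points of $I$ to \emph{all} of $I$.

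The main obstacle is the last step: converting this rigid level-set structure into a contradiction via a gap analysis on the closed set $I$. On each connected component $(a,b)$ of $I^c$, $\varphi$ has constant nonzero sign, so $u$ is constantly $\umin$ or $\umax$. For an internal gap ($a,b\in I$) both endpoints satisfy $(x,y)=(c_1,0)$; if $u=\umax$ then $x(t)=c_1\cos(\omax(t-a))<c_1$ immediately to the right of $a$, giving $\varphi=|c|x^2-1<0$ and contradicting $u=\umax$; if $u=\umin$ then $x(t)=c_1\cosh(\omin(t-a))>c_1$, giving $\varphi>0$ and contradicting $u=\umin$. Boundary gaps $[0,\alpha_0)$ or $(\beta_0,T]$ are killed by the same ODE analysis combined with $0<x\le 1$ (Lemmas~\ref{lem_y0} and~\ref{lem_xmin>0}): $u=\umin$ immediately forces $x>1$, while $u=\umax$ would require $\omax\alpha_0$ to be a positive multiple of $\pi$, which in turn forces $x$ to vanish. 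The only surviving configuration is $I=[0,T]$, which imposes $x\equiv c_1=x(0)=1$, $y\equiv 0$, $u\equiv 0$ with cost zero; but Proposition~\ref{prop_loop} produces an admissible trajectory of strictly negative cost, the final contradiction. Hence $|I|=0$ and \eqref{u_extr} fully describes $u$ almost everywhere.
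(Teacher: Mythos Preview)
Your argument is correct and follows the same overall architecture as the paper's proof: assume $|I|>0$, eliminate the abnormal case, derive that $(p_x,p_y)$ is proportional to $(-y,x)$ globally, and then show this forces the trivial stationary solution $(x,y,u)\equiv(1,0,0)$, which is beaten by the loop of Proposition~\ref{prop_loop}.

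The main technical difference lies in how you pass from the proportionality $(p_x,p_y)=c(-y,x)$ to the conclusion $(x,y)\equiv(1,0)$. You perform a case-by-case gap analysis on the connected components of $I^c$, eliminating internal and boundary gaps one by one via explicit ODE solutions and the constraint $0<x\le 1$. The paper instead exploits the Hamiltonian identity directly: once $p_x=-cy$ with $c<0$, one has $H_1=-cy^2+\max_{v}(v\varphi)$, a sum of two nonnegative terms equal to zero on all of $[0,T]$; this kills $y$ and $\varphi$ globally in one line without any gap analysis. Your route works, but note that the same Hamiltonian observation was already available to you right after you obtained $H_1=0$ and $p_x=-cy$; invoking it would have spared the entire ``main obstacle'' paragraph. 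Conversely, your derivation of $K_1=p_xx+p_yy\equiv 0$ by evaluating Lemma~\ref{lem1} at a single density point is slightly cleaner than the paper's sign-contradiction argument for $p_x(0)=0$.
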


\begin{proof}
In this proof, given any measurable function $f$ on $[0,T]$ and any subset $J\subset[0,T]$ of positive Lebesgue measure, we write ``$f=0$ a.e.~on $J$" to say that $f(t)=0$ for almost every $t\in J$. We recall that, when $f$ is absolutely continuous and thus almost everywhere differentiable, $f=0$ a.e.~on $I$ implies that $\dot f=0$ a.e.~on $I$ (see \cite[Lemma p.~177]{Rudin} or \cite[Theorem 6.3 p.~262 and Remark p.~264]{EvansGariepy}).
Note that, when $g=0$ a.e.~on $I$ for some continuous function $g$ then $g(t)=0$ for every $t\in I$ of positive density in $I$.

\medskip

The proof of the lemma goes by contradiction. Let us assume that the set $I$ defined by \eqref{def_I} is of positive Lebesgue measure. We have $\varphi=0$ on $I$, i.e., 
\begin{equation}\label{pyx}
p_yx=p^0\quad\textrm{on }I.
\end{equation}
Since $I$ is of positive Lebesgue measure, derivating two times \eqref{pyx} and using the dynamical equations \eqref{OCP_dyn_x}, \eqref{OCP_dyn_y}, \eqref{adjoint_x} and \eqref{adjoint_y}, we obtain 
\begin{align}
p_xx&=p_yy \label{pxxpyy}\quad\textrm{a.e.~on }I, \\
-p^0u&=p_xy \label{pxy}\quad\textrm{a.e.~on }I.
\end{align}
There are two cases, depending on whether $p^0=0$ or not.

\medskip

\noindent\textit{First case:} $p^0=0$ (abnormal case). Then $p_xx=p_yy=p_yx=p_xy=0$ a.e.~on $I$.
Note that: (1) we cannot have $x(t)=y(t)=0$ for some $t\in[0,T]$ by \ref{A1} in Lemma \ref{lem_prelim}; (2) we cannot have $p_x(t)=p_y(t)=0$ for some $t\in[0,T]$, for otherwise $(p_x(t),p_y(t),p^0)=(0,0,0)$, which contradicts the nontriviality of this triple stated in the Pontryagin maximum principle. These two remarks and the four above cancellations a.e.~on $I$ lead to a contradiction, and thus this case does not occur.

\medskip

\noindent\textit{Second case:} $p^0\neq 0$ (normal case). Since $(p_x(T),p_y(T),p^0)$ is defined up to scaling, we choose to normalize it, as usual (see \cite{LeeMarkus, Pontryagin, Trelat_bookSB}), so that $p^0=-1$. 
From \eqref{pyx} and \eqref{pxy}, we have then
\begin{align}
& p_yx=-1\quad\textrm{on }I, \label{pyx1} \\
& u=p_xy \quad\textrm{a.e.~on }I \label{pxy1}.
\end{align}
Recalling that the maximized Hamiltonian $H_1$ defined by \eqref{def_H1} is constant on $[0,T]$ along any extremal and that $H_1\geq 0$ (see \eqref{H1geq0}), and denoting by $H_1$ this constant, 
we have $p_xy=H_1$ on $I$ (because $\varphi=0$ on $I$), and thus, by \eqref{pxy1}, 
\begin{equation}\label{uH1}
u=p_xy=H_1\geq 0\quad\textrm{a.e.~on }I.
\end{equation}
In particular, $u$ is almost everywhere constant on $I$, and this constant is $H_1$ that is nonnegative. We are now going to prove that $H_1=0$.

By \eqref{lem1_plusprecis}, we have $p_x(t)x(t)+p_y(t)y(t)=p_x(0)$ for every $t\in[0,T]$. Using \eqref{pxxpyy}, we infer that 
\begin{equation}\label{pxxpyyC}
p_xx=p_yy=\frac{1}{2}p_x(0) \quad\textrm{a.e.~on }I.
\end{equation}
Let us prove that $p_x(0)=0$. By contradiction, if $p_x(0)\neq 0$, we consider a small interval $[s_1,s_2]\subset[0,T]$, with $s_1<s_2$, such that $[s_1,s_2]\cap I$ has a positive measure, with $s_2-s_1$ small enough so that, by continuity, $x$, $y$, $p_x$ and $p_y$ do not vanish on $[s_1,s_2]$.
By Lemma \ref{lem_xmin>0}, we have $x(t)>0$, and by \eqref{pyx1} we must have $p_y(t)<0$ on $[s_1,s_2]$.
By \eqref{uH1} we must have $\mathrm{sign}(p_x(t))=\mathrm{sign}(y(t))$, and by \eqref{pxxpyyC}, $\mathrm{sign}(p_x(t))=\mathrm{sign}(p_x(0))$ and $\mathrm{sign}(y(t))=-\mathrm{sign}(p_x(0))$, hence $y(t)$ and $p_x(t)$ have opposite signs. But this contradicts \eqref{uH1}. 

Therefore, $p_x(0)=0$. Then, by \eqref{pxxpyyC}, $p_xx=p_yy=0$ a.e.~on $I$, but since $p_y$ and $x$ cannot vanish by \eqref{pyx1}, it follows that $p_x=y=0$ a.e.~on $I$. Derivating $y=0$ a.e.~on $I$, we finally obtain $u=0$ a.e.~on $I$. This also proves that $H_1=0$.

Now, since $H_1=0$ is constant on the interval $[0,T]$, using \eqref{H1precis} (or, taking $t=0$ in \eqref{def_H1} and noting that $y(0)=0$), we must have $\varphi(0)=0$. Since $\varphi(0)=-p_y(0)x(0)-1$ and $x(0)=1$, this implies that $p_y(0)=-1$.

At this step, we have thus obtained that $p_x(0)=0$ and $p_y(0)=-1$.

Inspecting the adjoint differential equations \eqref{adjoint_x}-\eqref{adjoint_y}, we observe that $(-p_y(\cdot),p_x(\cdot),u(\cdot))$ is solution of \eqref{OCP_dyn_x}-\eqref{OCP_dyn_y}, like the triple $(x(\cdot),y(\cdot),u(\cdot))$, with the same control $u(\cdot)$ and the same initial condition $(1,0)$ (because $(-p_y(0),p_x(0))=(1,0)$). By Cauchy uniqueness it follows that $p_x(t) = y(t)$ and $p_y(t)=-x(t)$ for every $t\in[0,T]$. But then, since $H_1=0$ is constant on $[0,T]$, using again \eqref{def_H1} we infer that
$$
y(t)^2 + \max_{\umin\leq v\leq\umax}(v\varphi(t)) = 0 \qquad\forall t\in[0,T] ,
$$
and since the above maximum is nonnegative, we must have $y(t)^2=0$ thus $y(t)=0$ for every $t\in[0,T]$. Using \eqref{OCP_dyn_x}-\eqref{OCP_dyn_y} and $x(0)=1$, this implies that $(x(t),y(t))=(1,0)$ for every $t\in[0,T]$ and $u(t)=0$ for almost every $t\in[0,T]$.
But this trivial solution is not optimal because its cost is equal to $0$, while the loop trajectory constructed in Proposition \ref{prop_loop} has a negative cost (in time $T$), thus does better.
We have thus obtained a contradiction, and the lemma is proved.
\end{proof}

\subsection{Uniqueness of the optimal trajectory}\label{sec_uniqueness}

\begin{proposition}\label{prop_unique}
The curve constructed in Proposition \ref{prop_loop} is the unique optimal solution of \eqref{OCP}-\eqref{OCP_per_0}-\eqref{0<x<=1}. 
\end{proposition}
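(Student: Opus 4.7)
By Lemma \ref{lem_nonsing}, any optimal control $u(\cdot)$ is bang-bang with values in $\{\umin,\umax\}$, and by Lemma \ref{lem_xmin>0} the trajectory lives in $\{0<x\leq 1\}$. On a $\umax$-arc the curve $(x,y)$ follows an ellipse $x^2+y^2/\omax^2=\mathrm{const}$, and on a $\umin$-arc a hyperbola $x^2-y^2/\omin^2=\mathrm{const}$. The plan is to show in three stages that this concatenation must match \eqref{u_opt} with the unique switching time $t_1$ given by \eqref{Tt1}.

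First, I would pin down the control values near the boundary. Since $x(t)\leq 1$ with $x(0)=x(T)=1$ and $y(0)=y(T)=0$, both endpoints are interior maximizers of the $C^1$ function $x(\cdot)$, so $\ddot x(0^+)\leq 0$ and $\ddot x(T^-)\leq 0$. Using $\ddot x=-ux$ and $x>0$, this forces $u(0^+)\geq 0$ and $u(T^-)\geq 0$, and then bang-bang gives $u(0^+)=u(T^-)=\umax$. Integrating the ODE on the maximal initial and terminal $\umax$-arcs yields $x(t)=\cos(\omax t)$ on some $[0,t_1]$ and $x(t)=\cos(\omax(T-t))$ on some $[T-t_1',T]$ with $t_1,t_1'\in(0,T/2)$.

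The essential step is to prove that $u\equiv\umin$ on the interior interval $[t_1,T-t_1']$ (no further switches) and that $t_1'=t_1$. Here I would exploit the Stokes-theorem reformulation announced in the preamble of Section 2. Using $x>0$ and $\dot y=-ux$, one has $u\,dt=-dy/x$; the one-form $\omega=-dy/x$ on $\{x>0\}$ satisfies $d\omega=x^{-2}\,dx\wedge dy$, so for any clockwise admissible closed loop $\gamma=(x(\cdot),y(\cdot))$ enclosing a region $\Omega$,
\begin{equation*}
\int_0^T u(t)\,dt \;=\; \oint_\gamma\omega \;=\; -\iint_\Omega\frac{dx\,dy}{x^2}.
\end{equation*}
Thus the cost is the negative of a weighted area, and minimizing it amounts to maximizing $\iint_\Omega dx\,dy/x^2$ among admissible loops of period $T$ in $\{0<x\leq 1\}$. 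If $u$ possessed additional switches, the loop would contain an interior $\umax$-arc flanked by $\umin$-arcs; using the hyperbolic invariants \ref{H1}--\ref{H2} together with Corollary \ref{cor_loop}, I would construct a local perturbation that prolongs the two adjoining hyperbolae across this subarc, preserving the total period $T$ but strictly enlarging $\Omega$, and hence strictly decreasing the cost, contradicting optimality.

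With exactly two switches at $t_1$ and $T-t_1'$, the time-reversal $(x(T-\cdot),-y(T-\cdot),u(T-\cdot))$ is another optimal trajectory, whose initial $\umax$-arc has length $t_1'$; the same weighted-area argument applied to the difference of the two enclosed regions shows that among two-switch periodic trajectories the area is maximized by the symmetric configuration $t_1=t_1'$. The closing condition $y(T/2)=0$ on the central $\umin$-arc then reduces exactly to the relation \eqref{Tt1}, which by Lemma \ref{lem_Tt1} has a unique solution $t_1\in\bigl(0,\omax^{-1}\operatorname{Arctan}(\omin/\omax)\bigr)$. The optimal trajectory therefore coincides with the one constructed in Proposition \ref{prop_loop}. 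The main obstacle is clearly the Stokes-based perturbation argument in the third paragraph: one must carefully verify that the trajectory is a simple closed curve (so $\Omega$ is well-defined), that the replacement arc remains admissible (in particular, stays in $\{0<x\leq 1\}$), and that it keeps the total time equal to $T$ while strictly enlarging $\Omega$ — this is precisely the non-standard "global" feature flagged by the authors in Section \ref{sec_firstreduction}.
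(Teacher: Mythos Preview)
Your core insight---writing $u\,dt=-dy/x$ and recasting the cost as a Stokes/Green--Riemann integral $-\iint_\Omega x^{-2}\,dx\,dy$---is exactly the nonclassical one-form $\beta$ that the paper introduces in Step~2. But the paper's argument differs from yours in structure, and the differences are precisely where your acknowledged ``main obstacle'' lies.

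First, the paper does \emph{not} proceed by ruling out interior switches and then symmetrizing. Instead it compares an arbitrary optimal trajectory directly against the candidate of Proposition~\ref{prop_loop}, in the half-plane $y\leq 0$, by enclosing the region $\Omega$ between the two curves and applying Green--Riemann. For this it needs \emph{two} one-forms, not one: your $\omega=\beta=-dy/x$ controls the cost, but the clock form $\alpha=dx/y$ (with $d\alpha=y^{-2}\,dx\wedge dy>0$) controls the \emph{time}. The two Stokes inequalities together yield $\tilde t_{\min}>\hat t_{\min}$ and $\int_0^{\tilde t_{\min}}\tilde u>\int_0^{\hat t_{\min}}\hat u$, after which the time gap $\Delta t=\tilde t_{\min}-\hat t_{\min}>0$ is filled by a negative-cost loop from Corollary~\ref{cor_loop}. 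Your proposal invokes Corollary~\ref{cor_loop} but never introduces a device to show that the perturbation saves (or at least does not cost) time; without $\alpha$ or an equivalent, ``preserving the total period~$T$ while strictly enlarging~$\Omega$'' is an assertion, not a step.

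Second, the simplicity of the curve is not free: an optimal bang-bang trajectory may a priori self-intersect or contain nested sub-loops, so $\Omega$ need not be well defined. The paper handles this by a preliminary \emph{surgery} (Step~1): it decomposes $[0,t_{\min}]$ into monotone pieces and periodic sub-loops, cuts out the loops, homothetizes them toward the origin, and glues them to the left of the remaining curve. This produces a new optimal trajectory with the same cost whose restriction to $y\leq 0$ is monotone in $x$, hence a Jordan arc suitable for Green--Riemann. Your outline flags this issue but offers no mechanism to resolve it; the surgery step is the missing idea.

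In short: your Stokes reformulation is on target, but to close the argument you need (i) the surgery step to force a Jordan curve, and (ii) the second one-form $\alpha=dx/y$ (or an equivalent time estimate) to reconcile the period constraint. With those in hand, the paper's direct comparison to the candidate is actually cleaner than your switch-elimination-plus-symmetrization route, since it dispatches the structure and the value of $t_1$ in one stroke.
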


\begin{proof}
Let $(x(\cdot),y(\cdot),u(\cdot))$ be an optimal solution of \eqref{OCP}-\eqref{OCP_per_0}-\eqref{0<x<=1}. By the proof of Lemma \ref{lem_xmin>0}, we have $0<\xmin\leq x(t)\leq 1$ for every $t\in[0,T]$, where $\xmin>0$  is the minimal value of $x(t)$.
Besides, as a consequence of Lemma \ref{lem_nonsing}, the time interval $[0,T]$ is a countable union of open intervals along which either $u(t)=\umin$, and then the curve $(x(\cdot),y(\cdot))$ follows clockwise an arc of hyperbole (according to the computations done in the proof of Proposition \ref{prop_loop}), or $u(t)=\umax$, and then the curve $(x(\cdot),y(\cdot))$ follows clockwise an arc of ellipse.
Moreover, since $\dot x=y$ and $\dot y=-ux$, the function $t\mapsto x(t)$ is decreasing in the half-plane $y<0$, and increasing in the half-plane $y>0$. The curve $(x(\cdot),y(\cdot))$ turns clockwise and can cross the $x$-axis only with a vertical tangent. 

The structure of the control $u(\cdot)$ may be complicated, though: a priori, it may switch an infinite 
number of times, but the set $\{t\in[0,T]\ \mid\ \varphi(t)\}$ is of measure zero. 

Like in Lemma \ref{lem_xmin>0}, we denote by $\xmin$ the minimal value of $x(t)$ over all possible $t\in[0,T]$, and let $\tmin\in(0,T)$ be such that $x(\tmin)=\xmin$.
The function $t\mapsto x(t)$ may fail to be decreasing on $[0,\tmin]$: it may happen that the curve $(x(\cdot),y(\cdot))$ crosses the $x$-axis at some time $\bar t\in(0,\tmin)$, i.e., with $\xmin<x(\bar t)<1$ and $y(\bar t)=0$, then penetrates in the region $y>0$ (where $t\mapsto x(t)$ is increasing) and comes back later in the region $y<0$ (see Figure \ref{fig_surgery} on the left).

\smallskip
The proof goes by contradiction. Assuming that $(x(\cdot),y(\cdot),u(\cdot))$ differs from the trajectory constructed in Proposition \ref{prop_loop}, we are going to build a new trajectory with the same terminal conditions, having a lower cost, thus reaching a contradiction.
Without loss of generality, we assume that the trajectories differ in the half-plane $y\leq 0$.

Let us write the interval $[0,\tmin]$ as the countable union of some intervals $I_k$ and $J_p$ (all disjoint two by two), for $k$ and $p$ ranging in some countable set, such that:
\begin{itemize}
\item on each open interval $I_k$, $x$ is decreasing and $y\leq 0$, 
\item on each closed interval $J_p$, the curve $(x(\cdot),y(\cdot))$ is periodic, i.e., $x(\min J_p)=x(\max J_p)$ and $y(\min J_p)=y(\max J_p)\leq 0$ (it makes one or several loops),
\end{itemize}
as illustrated on Figure \ref{fig_surgery}, on the left. 

\begin{figure}[h]
\centerline{\includegraphics[width=14cm]{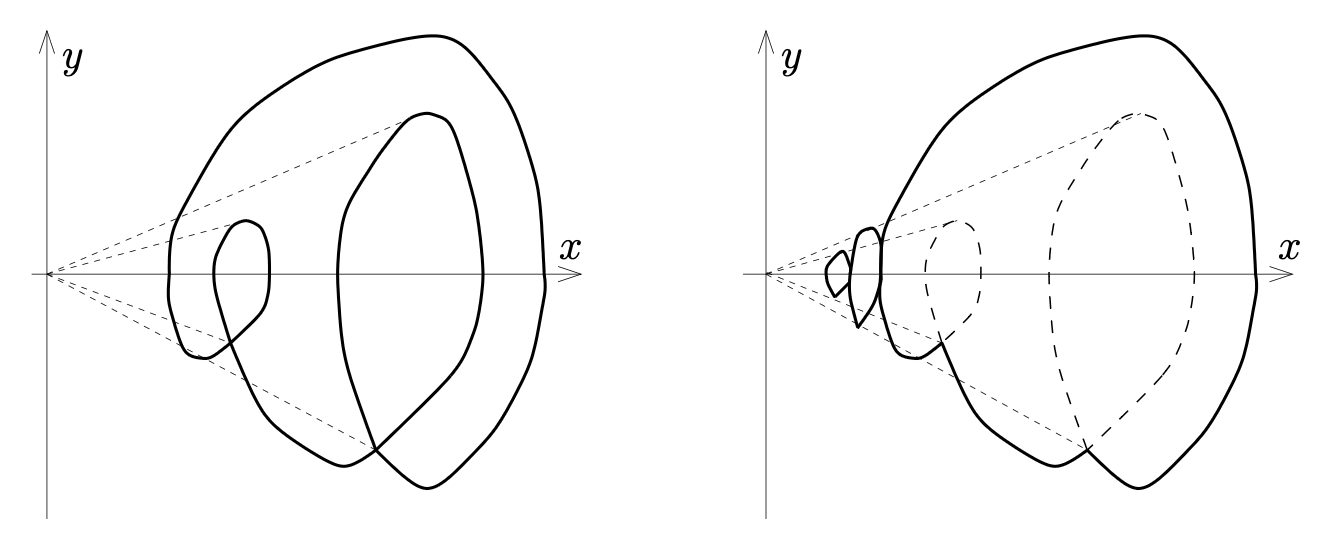}}
\caption{Surgery. On the left figure, the initial curve (in plain). On the right figure, periodic arcs have been cut and homothetized to the left.}\label{fig_surgery}
\end{figure}

The proof now goes in three steps.

\medskip

\noindent{\it Step 1.}
We first make some ``surgery", in order to build another curve having the same cost (and thus, being optimal as well): take an arbitrary periodic arc, parametrized on an interval $J_p$, cut it from the original curve, apply to it an homothety (from the origin) so as to glue it to the left of the curve; then repeat this operation for all such periodic arcs index by $p$ (this can be done in any order, and there may be an infinite countable number of such arcs).
By \ref{A3} in Lemma \ref{lem_prelim}, this homothety does not affect the control along $J_p$ and thus does not change its cost.

Now, we reparametrize the newly obtained curve (of which the minimal value $\tilde x_{\mathrm{min}}$ of the $x$-component is now lower than $\xmin$), by shifting in time and concatenating, so that the new trajectory, denoted $(\tilde x(\cdot),\tilde y(\cdot),\tilde u(\cdot))$, is still defined on $[0,T]$ and now consists of a countable number of successive arcs as follows: there exists $\tilde t_{\mathrm{min}}\in(0,\tmin]$ such that, on $[0,\tilde t_{\mathrm{min}}]$, the function $t\mapsto\tilde x(t)$ is decreasing, with $\tilde x(\tilde t_{\mathrm{min}}) = \tilde x_{\mathrm{min}}$ that is the minimal value of $\tilde x(t)$ over all possible $t\in[0,T]$.
The part of $(\tilde x(\cdot),\tilde y(\cdot))$ that is contained in the half-plane $y\leq 0$ is drawn on Figure \ref{fig_surgery_half}. 
\begin{figure}[h]
\centerline{\includegraphics[width=7cm]{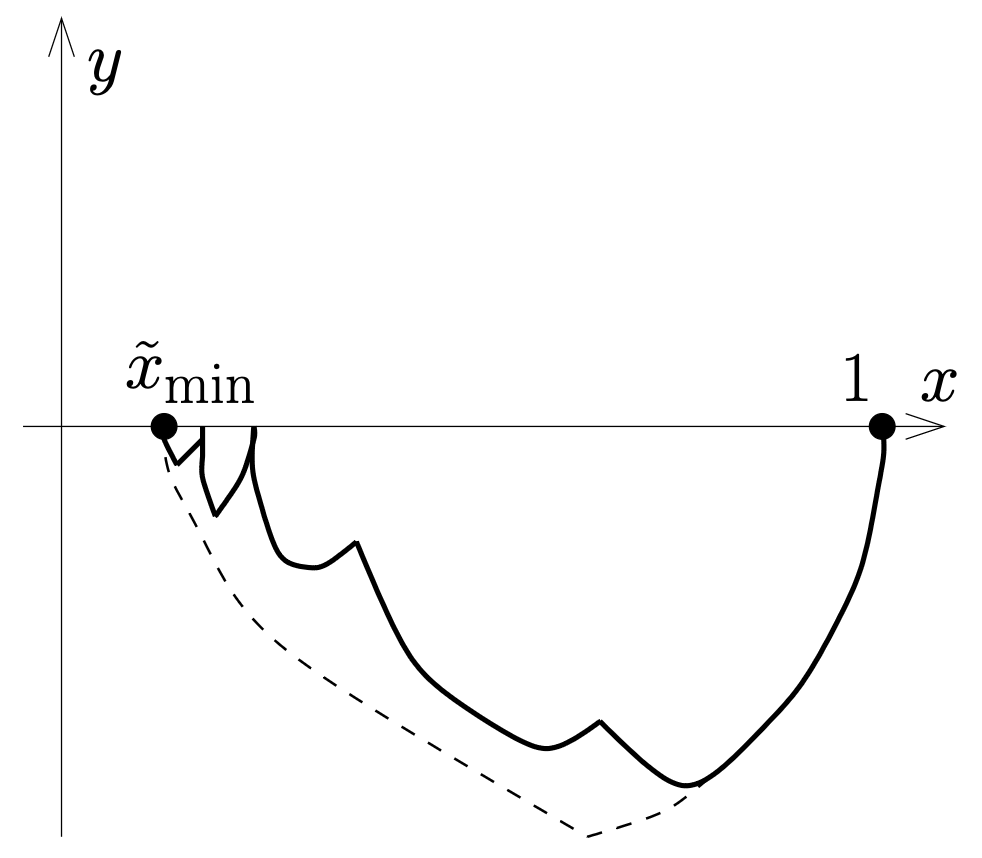}}
\caption{In plain, the curve $(\tilde x(\cdot),\tilde y(\cdot))$ restricted to $[0,\tilde t_{\mathrm{min}}]$, obtained after surgery. In dashed, the trajectory considered in Step 2.}\label{fig_surgery_half}
\end{figure}
On the interval $[\tilde t_{\mathrm{min}},\tmin]$, we keep the pieces of the homothetized arcs contained in $y\geq 0$ until we reach the point $(\xmin,0)$, and then we set $(\tilde x(t),\tilde y(t),\tilde u(t))=(x(t),y(t),u(t))$ on the time interval $[\tmin,T]$, this piece steering $(\xmin,0)$ to $(1,0)$ (see Figure \ref{fig_surgery} on the right, in the half-plane $y\geq 0$).
The cost of the new trajectory $(\tilde x(\cdot),\tilde y(\cdot),\tilde u(\cdot))$ is the same as the initial one $(x(\cdot),y(\cdot),u(\cdot))$, i.e., 
\begin{equation}\label{cost_utildeu}
\int_0^T \tilde u(t)\, dt = \int_0^T u(t)\, dt.
\end{equation}
In particular, $(\tilde x(\cdot),\tilde y(\cdot),\tilde u(\cdot))$ is an optimal solution of \eqref{OCP}-\eqref{x<=1}-\eqref{OCP_per_0}, like $(x(\cdot),y(\cdot),u(\cdot))$.
Actually, the control $\tilde u(\cdot)$ is a kind of rearrangement of the initial control $u(\cdot)$, obtained by shifting in time some subintervals and rearranging them differently, without changing the time interval $[0,T]$.
Moreover, $(\dot{\tilde x}(t),\dot{\tilde y}(t))\neq (0,0)$ for almost every $t\in[0,T]$, and even, for every $t\in[0,T]$ except maybe on a countable subset of $[0,T]$ thanks to the monotonicity of $\tilde x(\cdot)$. This shows that $(\tilde x(\cdot),\tilde y(\cdot))$ is a Jordan curve, what we are going to use in the second step.

Note also that, since we have assumed (by contradiction) that $(x(\cdot),y(\cdot),u(\cdot))$ differs from the trajectory constructed in Proposition \ref{prop_loop}, the same holds true for the new trajectory $(\tilde x(\cdot),\tilde y(\cdot),\tilde u(\cdot))$.

\medskip

\noindent{\it Step 2.}
As a second step, based on $(\tilde x(\cdot),\tilde y(\cdot),\tilde u(\cdot))$, we are now going to construct a new trajectory $(\hat x(\cdot),\hat y(\cdot),\hat u(\cdot))$ defined on $[0,T]$, with a (strictly) lower cost, thus reaching a contradiction. 

The strategy is completely different from the one used in Step 1. To explain it, we start by noting that the control system \eqref{OCP_dyn_x}-\eqref{OCP_dyn_y} is control-affine, written as
\begin{equation}\label{cont_syst_affine}
\dot q(t) = X(q(t))+u(t)Y(q(t))
\end{equation}
where $q=(x,y)$ and the two vector fields $X$ and $Y$ on $\R^2$ are defined by
$$
X(q) = y\frac{\partial}{\partial x} \qquad \textrm{and}\qquad Y(q) = -x\frac{\partial}{\partial y} .
$$
Let $\alpha$ and $\beta$ be the smooth differential one-forms on $(0,+\infty)\times(-\infty,0)$ defined by $\alpha = \frac{dx}{y}$ and $\beta = -\frac{dy}{x}$.
Note that $d\alpha = \frac{dx\wedge dy}{y^2}$ and $d\beta = \frac{dx\wedge dy}{x^2}$.
By definition of $\alpha$ and $\beta$, we have $\alpha(X)=1$ and $\alpha(Y)=0$, $\beta(X)=0$ and $\beta(Y)=1$. Hence $\alpha_{q(t)}(\dot q(t))=1$ and $\beta_{q(t)}(\dot q(t))=u(t)$ for almost every $t$, for any given solution $q(\cdot)$ of \eqref{cont_syst_affine} such that $y(t)\neq 0$ almost everywhere. 

The use of one-differential forms, combined with the application of the Green-Riemann (Stokes) theorem, to get optimality properties for trajectories in the plane is known in optimal control (see \cite{HermesLaSalle}, see also \cite{BonnardChyba} where the one-form $\alpha$ is referred to as the clock form), but the above form $\beta$ is nonclassical, up to our knowledge.

The reasoning goes as follows. 
First of all, we consider, in the half-plane $y\leq 0$, the trajectory constructed in Proposition \ref{prop_loop}, denoted $(\hat x(\cdot),\hat y(\cdot),\hat u(\cdot))$, that steers the control system \eqref{OCP_dyn_x}-\eqref{OCP_dyn_y}-\eqref{OCP_cont_const} from $(1,0)$ to $(\tilde x_{\mathrm{min}},0)$ in time denoted by $\hat t_{\mathrm{min}}>0$ (and thus, it goes from $(1,0)$ to $(1,0)$ in time $2\hat t_{\mathrm{min}}$).
The curve $(\hat x(\cdot),\hat y(\cdot))$ is drawn in dashed on Figure \ref{fig_surgery_half}. 

Hence, we now have two trajectories steering the control system \eqref{OCP_dyn_x}-\eqref{OCP_dyn_y}-\eqref{OCP_cont_const} from $(1,0)$ to $(\tilde x_{\mathrm{min}},0)$:
\begin{itemize}
\item $(\tilde x(\cdot),\tilde y(\cdot),\tilde u(\cdot))$ (in plain on Figure \ref{fig_surgery_half}) does it in time $\tilde t_{\mathrm{min}}$,
\item $(\hat x(\cdot),\hat y(\cdot),\hat u(\cdot))$ (in dashed on Figure \ref{fig_surgery_half}) does it in time $\hat t_{\mathrm{min}}$.
\end{itemize}
Let us consider the curve $\Gamma$, making a counterclockwise loop from $(1,0)$ to $(1,0)$, by first following the curve $(\tilde x(\cdot),\tilde y(\cdot))$, going from $(1,0)$ to $(\tilde x_{\mathrm{min}},0)$ in time $\tilde t_{\mathrm{min}}$, and then following backward in time the curve $(\hat x(\cdot),\hat y(\cdot))$, going from $(\tilde x_{\mathrm{min}},0)$ to $(1,0)$ in time $\hat t_{\mathrm{min}}$.
Since $(\dot{\tilde x}(t),\dot{\tilde y}(t))\neq (0,0)$ for almost every $t\in[0,\tilde t_{\mathrm{min}}]$ and $(\dot{\hat x}(t),\dot{\hat y}(t))\neq (0,0)$ for almost every $t\in[0,\hat t_{\mathrm{min}}]$, it follows that $\Gamma$ is a Jordan curve.
Applying the Green-Riemann theorem yields
$$
\int_\Gamma\alpha = \int_\Omega d\alpha > 0
\qquad\textrm{and}\qquad
\int_\Gamma\beta = \int_\Omega d\beta > 0
$$
for $i=1,2$, because we have $d\omega_i>0$ in both cases (one can note that $\int_\Omega d\alpha$ converges in spite of the singularity $1/y^2$ because we integrate on a cusp region).
This gives 
\begin{equation}\label{cost_uhat}
\tilde t_{\mathrm{min}}>\hat t_{\mathrm{min}}
\qquad\textrm{and}\qquad
\int_0^{\tilde t_{\mathrm{min}}} \tilde u(t)\, dt > \int_0^{\hat t_{\mathrm{min}}} \hat u(t)\, dt.
\end{equation}

\medskip

\noindent{\it Step 3.}
Now comes the final construction, which raises a contradiction.
Recall that, by the above construction, $0 < \hat t_{\mathrm{min}} < \tilde t_{\mathrm{min}} \leq \tmin < T$, $\tilde x_{\mathrm{min}} = \hat x(\hat t_{\mathrm{min}}) = \tilde x(\tilde t_{\mathrm{min}})$ and $\xmin = \tilde x(\tmin) =  x(\tmin)$.
In particular, we set $\Delta t = \tilde t_{\mathrm{min}} - \hat t_{\mathrm{min}} >0$.
We consider the curve $(x_1(\cdot),y_1(\cdot))$ (of control $u_1(\cdot)$) that is defined as the concatenation of three pieces:
\begin{enumerate}
\item First, follow the curve $(\hat x(\cdot),\hat y(\cdot))$ from $(1,0)$ to $(\tilde x_{\mathrm{min}},0)$ (contained in $y\leq 0$). This curve is defined on the time interval $[0,\hat t_{\mathrm{min}}]$.
\item Then, follow the curve $(\tilde x(\cdot),\tilde y(\cdot))$ from $(\tilde x_{\mathrm{min}},0)$ to $(\xmin,0)$ (contained in $\tilde x_{\mathrm{min}}\leq x\leq\xmin$, $y\geq 0$). This curve is defined on the time interval $[\tilde t_{\mathrm{min}},\tmin]$, so we need to shift it in time, by advancing it by $\Delta t = \tilde t_{\mathrm{min}} - \hat t_{\mathrm{min}} >0$.
\item Finally, follow the curve $(\tilde x(\cdot),\tilde y(\cdot))=(x(\cdot),y(\cdot))$ from $(\xmin,0)$ to $(1,0)$ (contained in $y\geq 0$). This curve is defined on the time interval $[\tmin,T]$, so we need to shift it in time, by advancing it by $\Delta t = \tilde t_{\mathrm{min}} - \hat t_{\mathrm{min}} >0$.
\end{enumerate}
Precisely, the control $u_1(\cdot)$ is defined by
$$
u_1(t) = \left\{ \begin{array}{ll}
\hat u(t) & \textrm{if}\ \  0 < t < \hat t_{\mathrm{min}} , \\ 
\tilde u(t+\Delta t) & \textrm{if}\ \  \tilde t_{\mathrm{min}}-\Delta t < t < \tmin  - \Delta t , \\ 
\tilde u(t+\Delta t) = u(t+\Delta t) & \textrm{if}\ \  \tmin  - \Delta t < t < T-\Delta t .
\end{array}\right.
$$
The trajectory $(x_1(\cdot),y_1(\cdot),u_1(\cdot))$ is a solution of \eqref{OCP_dyn_x}-\eqref{OCP_dyn_y}-\eqref{OCP_cont_const}, satisfies $x_1(\cdot)\leq 1$ and steers the control system from $(1,0)$ to $(1,0)$ in time $T-\Delta t$. Using the strict inequality \eqref{cost_uhat} and the equality \eqref{cost_utildeu}, its cost is
\begin{equation*}
\begin{split}
\int_0^{T-\Delta t} u_1(t)\, dt 
&= \int_0^{\hat t_{\mathrm{min}}} \hat u(t)\, dt + \int_{\tilde t_{\mathrm{min}}}^{\tmin} \tilde u(t)\, dt + \int_{\tmin}^T \tilde u(t)\, dt \\
&< \int_0^{\tilde t_{\mathrm{min}}} \tilde u(t)\, dt + \int_{\tilde t_{\mathrm{min}}}^{\tmin} \tilde u(t)\, dt + \int_{\tmin}^T \tilde u(t)\, dt 
= \int_0^T \tilde u(t)\, dt = \int_0^T u(t)\, dt ,
\end{split}
\end{equation*}
i.e., its cost is lower than the cost of the initial trajectory $(x(\cdot),y(\cdot),u(\cdot))$.

To get a trajectory defined on the same interval of time $[0,T]$, we finally extend the trajectory $(x_1(\cdot),y_1(\cdot),u_1(\cdot))$ to the interval $[0,T]$, by concatenating it with a small loop of negative cost on $[T-\Delta t,T]$. 
This can be done because, by Corollary \ref{cor_loop}, there exists a trajectory $(x_{\Delta t}(\cdot),y_{\Delta t}(\cdot),u_{\Delta t}(\cdot))$, solution of \eqref{OCP_dyn_x}-\eqref{OCP_dyn_y}-\eqref{OCP_cont_const} on $[T-\Delta t,T]$, such that $x_{\Delta t}(T-\Delta t)=x_{\Delta t}(T)=1$ and $y_{\Delta t}(T-\Delta t)=y_{\Delta t}(T)=0$, satisfying $x_{\Delta t}(t)\leq 1$ for every $t\in[T-\Delta t,T]$, and of negative cost $\int_{T-\Delta t}^T u_{\Delta t}(t)\, dt  <0$. 

Then, concatenating $(x_1(\cdot),y_1(\cdot),u_1(\cdot))$ on $[0,T-\delta T]$ with $(x_{\Delta t}(\cdot),y_{\Delta t}(\cdot),u_{\Delta t}(\cdot))$ on $[T-\Delta t,T]$, we obtain a solution of \eqref{OCP_dyn_x}-\eqref{OCP_dyn_y}-\eqref{OCP_cont_const} on $[0,T]$, lying in $0<x\leq 1$, steering the control system from $(1,0)$ to $(1,0)$ in time $T$, of cost (strictly) lower than $\int_0^T u(t)\, dt$. This raises a contradiction with the optimality of the trajectory $(x(\cdot),y(\cdot),u(\cdot))$.

The proposition is proved.
\end{proof}

\subsection{Conclusion: end of the proof of Theorem \ref{main_thm}}
In Section \ref{sec_firstreduction}, we have first reduced the initial optimal control problem \eqref{OCP}-\eqref{OCP_0} to the optimal control problem \eqref{OCP}-\eqref{OCP_per}, noting that any other solution of \eqref{OCP}-\eqref{OCP_0} can be deduced from the one of \eqref{OCP}-\eqref{OCP_per} by simple geometric considerations. Then, in Section \ref{sec_secondreduction}, we have shown that the optimal control problem \eqref{OCP}-\eqref{OCP_per} is equivalent to the optimal control problem \eqref{OCP}-\eqref{OCP_per_0}-\eqref{x<=1}, which itself is equivalent, by Lemma \ref{lem_xmin>0} in Section \ref{sec_xmin>0}, to the optimal control problem \eqref{OCP}-\eqref{OCP_per_0}-\eqref{0<x<=1}.
Finally, we have proved in Proposition \ref{prop_unique} (in Section \ref{sec_uniqueness}) that the optimal control problem \eqref{OCP}-\eqref{OCP_per_0}-\eqref{0<x<=1} has a unique solution, which is the one constructed in Proposition \ref{prop_loop}. This proves Theorem \ref{main_thm}.

\section{Additional remarks}\label{sec_add_rem}

\subsection{Uniqueness of the adjoint}

\begin{lemma}\label{lem_adjoint_unique}
The unique optimal solution $(x(\cdot),y(\cdot),u(\cdot))$ of \eqref{OCP}-\eqref{OCP_0}, described in Theorem \ref{main_thm}, has a unique extremal lift $(x(\cdot),y(\cdot),p_x(\cdot),p_y(\cdot),p^0,u(\cdot))$ up to scaling on the adjoint triple, which is normal. Moreover, normalizing it so that $p^0=-1$, and the adjoint vector is given by
$$
(p_x(t), p_y(t)) = p_y(0)(-y(t), x(t)) \qquad \forall t\in[0,T] ,
$$
and we have $p_y(0)<0$.
\end{lemma}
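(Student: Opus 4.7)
The plan is to first rule out the abnormal case, then, in the normal case, exploit a hidden duality between the state and adjoint systems to pin down the adjoint. The duality is the elementary observation that $(-p_y(\cdot),p_x(\cdot))$ satisfies exactly the state equation $\dot z_1=z_2$, $\dot z_2=-u(t)z_1$: indeed, $\frac{d}{dt}(-p_y)=p_x$ by \eqref{adjoint_y} and $\frac{d}{dt}(p_x)=u p_y=-u(-p_y)$ by \eqref{adjoint_x}. Hence, if one can show $p_x(0)=0$, then $(-p_y(0),p_x(0))=-p_y(0)(x(0),y(0))$ and Cauchy uniqueness for this 2D linear ODE yields $(-p_y(t),p_x(t))=-p_y(0)(x(t),y(t))$ for all $t$, i.e., $(p_x,p_y)=p_y(0)(-y,x)$, which is the claimed formula.

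\textbf{Abnormal case ruled out.} If $p^0=0$, then $\varphi(t)=-p_y(t)x(t)$; since $x>0$ on $[0,T]$ by Lemma \ref{lem_xmin>0}, the sign changes of $\varphi$ at the switching times $t_1$ and $T-t_1$ force $p_y(t_1)=p_y(T-t_1)=0$. On $[t_1,T-t_1]$ one has $p_y(t)=A e^{\omin t}+B e^{-\omin t}$, and imposing both vanishing conditions yields $e^{2\omin t_1}=e^{2\omin(T-t_1)}$, which is impossible for $T>2t_1$ unless $A=B=0$. But then $p_y\equiv p_x\equiv 0$ on the middle interval, and Cauchy uniqueness for the linear adjoint ODE propagates this to all of $[0,T]$, contradicting the nontriviality of $(p_x,p_y,p^0)$. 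Hence $p^0\neq 0$, and we normalize $p^0=-1$.

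\textbf{Proof that $p_x(0)=0$.} Still writing $p_y(t)=A e^{\omin t}+B e^{-\omin t}$ on $[t_1,T-t_1]$, the switching conditions $\varphi(t_1)=\varphi(T-t_1)=0$ together with the symmetry $x(t_1)=x(T-t_1)$ of the optimal trajectory imply $p_y(t_1)=p_y(T-t_1)=-1/x(t_1)$. Solving for $A,B$ forces $B=A e^{\omin T}$, whence
\[
p_x(t)=-\dot p_y(t)=\omin A\bigl(e^{\omin(T-t)}-e^{\omin t}\bigr),
\]
which vanishes at $t=T/2$. Since $y(T/2)=0$, evaluating the conserved quantity of Lemma \ref{lem1} at $t=0$ and at $t=T/2$ gives $p_x(0)=p_x(0)x(0)+p_y(0)y(0)=p_x(T/2)x(T/2)+p_y(T/2)y(T/2)=0$, as needed.

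\textbf{Scaling and sign.} With $(p_x,p_y)=p_y(0)(-y,x)$ in hand, the switching condition $\varphi(t_1)=0$ reads $-p_y(0)x(t_1)^2-1=0$, so $p_y(0)=-1/x(t_1)^2<0$. This simultaneously fixes the scaling uniquely and confirms the sign. I expect the main obstacle to be the derivation of $p_x(T/2)=0$: it couples the two switching conditions, the $x$-axis symmetry of the optimal trajectory, and the explicit exponential structure on $[t_1,T-t_1]$. Once this vanishing is established, the state/adjoint duality together with Cauchy uniqueness delivers the formula, and uniqueness up to scaling follows.
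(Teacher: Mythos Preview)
Your proof is correct and follows a genuinely different route from the paper's. Both arguments exploit the same duality---that $(-p_y,p_x)$ satisfies the state ODE with the same control $u$---but they diverge on how to identify this curve with a multiple of $(x,y)$. The paper notes that $(-p_y,p_x)$ is $T$-periodic (by Lemma~\ref{lem_per}), hence is itself an optimal solution of \eqref{OCP}-\eqref{OCP_0} with the same control and cost; invoking the uniqueness part of Theorem~\ref{main_thm}, it must equal $(\mu\,x(\cdot+\delta),\mu\,y(\cdot+\delta))$ for some $\mu,\delta$, and since the control must also be shifted by $\delta$ but equals $u$, one gets $\delta=0$. Normality is then deduced a posteriori from the switching structure. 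Your approach is more hands-on: you first eliminate abnormality by a direct linear-ODE computation on $[t_1,T-t_1]$, then pin down $p_x(0)=0$ by combining the two switching conditions $\varphi(t_1)=\varphi(T-t_1)=0$, the symmetry $x(t_1)=x(T-t_1)$, and the conserved quantity of Lemma~\ref{lem1} evaluated at $t=T/2$ (where both $y$ and, as you show, $p_x$ vanish). Cauchy uniqueness and the switching condition at $t_1$ then give the formula and the sign $p_y(0)=-1/x(t_1)^2<0$.

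What each approach buys: the paper's argument is shorter and conceptual, but it leans on the full strength of Theorem~\ref{main_thm} (uniqueness up to shift and homothety) and on a slightly informal ``same arcs, hence shift is zero'' step. Your argument is more self-contained---it uses only the explicit form of the optimal control and trajectory plus Lemma~\ref{lem1}---and it yields the explicit value $p_y(0)=-1/x(t_1)^2$ as a byproduct. One minor point worth making explicit in your write-up: to use the exponential formula for $p_y$ at $t=T/2$ you need $T/2\in(t_1,T-t_1)$, i.e.\ $T>2t_1$, which indeed follows from \eqref{Tt1} since the logarithmic term is positive.
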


\begin{proof}
As in the proof of Lemma \ref{lem_nonsing} in Section \ref{sec_nonsing}, by the adjoint differential equations \eqref{adjoint_x}-\eqref{adjoint_y}, we observe that $(-p_y(\cdot),p_x(\cdot),u(\cdot))$ is solution of \eqref{OCP_dyn_x}-\eqref{OCP_dyn_y}, like the triple $(x(\cdot),y(\cdot),u(\cdot))$, with the same control $u(\cdot)$ (thus, with the same cost) but not with the same initial condition. However, the pair $(-p_y(\cdot),p_x(\cdot))$ is $T$-periodic. By uniqueness of the solution of the optimal control problem \eqref{OCP}-\eqref{OCP_per}, and using Lemma \ref{lem_prelim}, we infer that $(-p_y(\cdot),p_x(\cdot))$ is equal to $(x(\cdot),y(\cdot))$ up to scaling and to shifting in time. But since both curves are generated by the same control $u(\cdot)$ and thus consist of following two arcs, an arc of ellipse and an arc of hyperbole (see Figure \ref{fig_thm}, it follows that the shift in time is zero and thus there exists $\lambda\neq 0$ such that $x(t)=-\lambda p_y(t)$ and $y(t)=\lambda p_x(t)$ for every $t\in[0,T]$. In particular, we must have $p_x(0)=0$ and, using for instance the positive sign of $H_1$, necessarily $\lambda>0$ and $p_y(0)=-\frac{1}{\lambda}<0$. 
The switching function is then $\varphi(t)=\frac{x(t)^2}{\lambda}+p^0$. Therefore $p^0<0$ (otherwise, if $p^0=0$ then $\varphi>0$ and there would be no switching) and we can normalize the adjoint so that $p^0=-1$. 
\end{proof}

\subsection{Shooting method}

Setting $\lambda=-1/p_y(0)>0$ as in the proof of Lemma \ref{lem_adjoint_unique}, we have $\lambda\varphi = x^2-\lambda$, and we thus infer from \eqref{u_extr} that
\begin{equation}\label{u_extr_lambda}
u(t) = \left\{ \begin{array}{ll}
\umin & \textrm{if}\ \ x(t)^2<\lambda, \\[1mm]
\umax & \textrm{if}\ \ x(t)^2>\lambda.
\end{array}\right.
\end{equation}
By the way, since $x(0)=1$, we must have, actually, $0<\lambda<1$. 

The expression \eqref{u_extr_lambda} for the optimal control gives an alternative way to compute optimal trajectories by implementing the \emph{shooting method}. Here, there is only one shooting parameter, that is $\lambda\in(0,1)$ (and actually, $\lambda=-1/p_y(0)$ as said above), and this parameter must be tuned so that, when one integrates the differential equations \eqref{OCP_dyn_x}-\eqref{OCP_dyn_y}, with initial condition $(x(0),y(0))=(1,0)$, one must have $x(T)=1$ at time $T$. 

This is the method that has been implemented in \cite{LazarusTrelat_PNAS}.

\subsection{Turnpike phenomenon}\label{sec_turnpike}
As said in Remark \ref{rem1}, when $T\rightarrow+\infty$ the arc of the curve $t\mapsto(x(t),y(t))$ on $\big[t_1,\frac{T}{2}\big]$, along which $u=\umin$, converges to the segment joining the point $\Big( \sqrt{\frac{\umax}{\umax-\umin}}, -\sqrt{\frac{-\umin\umax}{\umax-\umin}} \Big)$ to the point $(0,0)$. This is because the arc of hyperbole converges to a ``V-shape" (with the ``V" rotated by $-\pi/2$), as it can be seen on the numerical simulation reported on Figure \ref{fig_Vshape}.

\begin{figure}[h]
\centerline{\includegraphics[width=9cm]{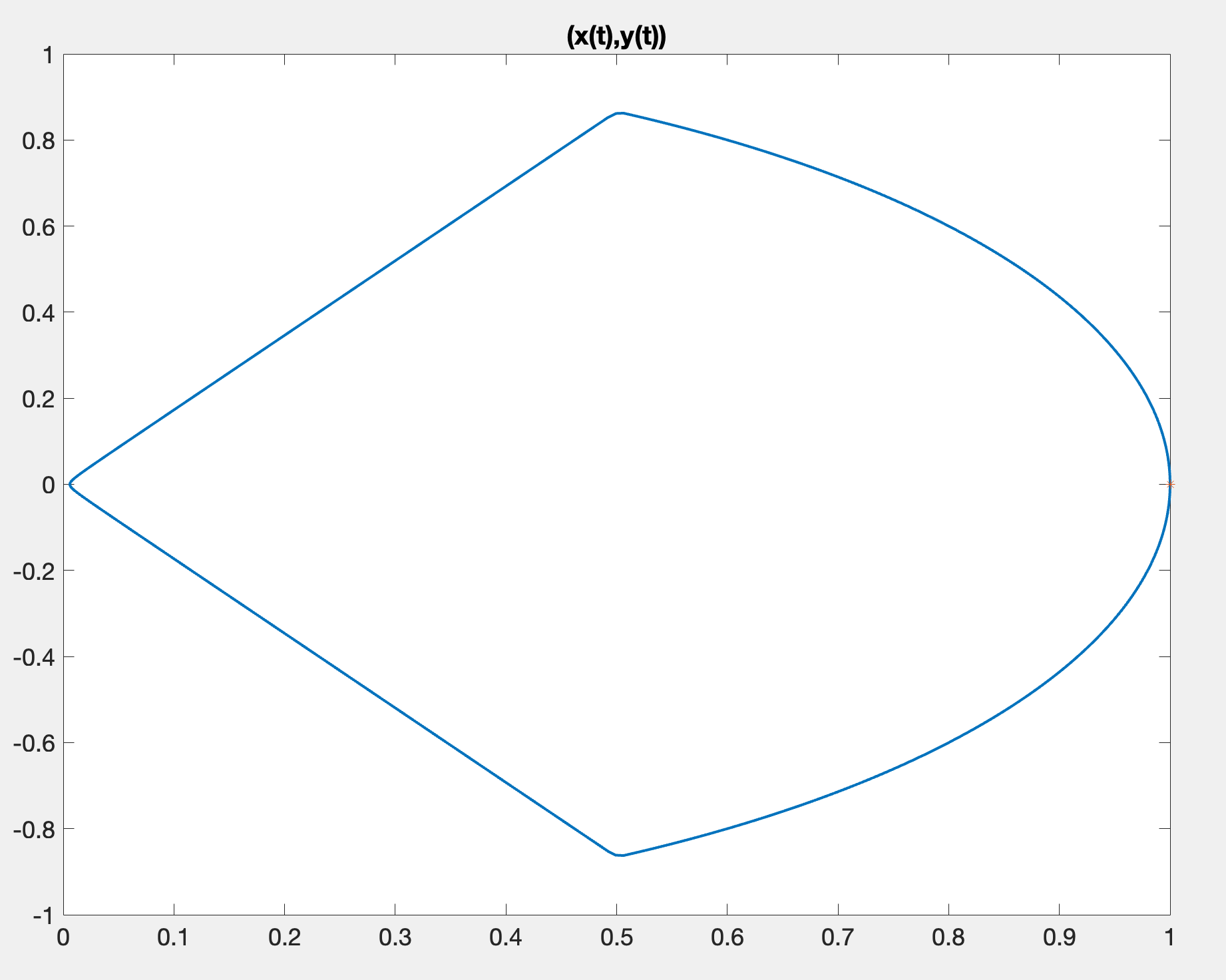}}
\caption{Numerical simulation when $T$ is large. Here: $\umin=-3$, $\umax=1$, $T=8$.}\label{fig_Vshape}
\end{figure}

The turnpike phenomenon refers to the following typical property enjoyed by optimal trajectories of a large classes of optimal control problems in large horizon of time: when $T$ is large, the optimal trajectories (as well as their control and adjoint vector) tend to spend most of their time near a steady-state, itself being the optimal solution of a static optimization problem (see \cite{TrelatZuazua_JDE2015}). 

This phenomenon is evident here: the optimal steady-state is $(\bar x,\bar y,\bar u)=(0,0,\umin)$. 
The turnpike phenomenon observed on Figure \ref{fig_Vshape} is particularly intuitive for the optimal control problem \eqref{OCP}-\eqref{OCP_0}: as said in Section \ref{sec_OCP}, when we relax the constraint \eqref{OCP_>0} then the optimal solution is the steady-state $(\bar x,\bar y,\bar u)=(0,0,\umin)$. Under the nontriviality constraint \eqref{OCP_>0}, optimal trajectories tend to ``reproduce" as much as possible this optimal steady-state. This is the turnpike phenomenon.

In some sense, this observation ``trivializes" the computation of optimal trajectories in large time: when $T\rightarrow+\infty$, the unique optimal trajectory passing through the point $(1,0)$ consists of two pieces, one being an arc of ellipse with a shape of a ``U" rotated by $\pi/2$, and the other being the union of two segments (``V-shape"), as said at the beginning of this section. Said in other words, in large time the optimal trajectories look like ``teardrops".

\subsection{Locally optimal butterfly solutions}
In Theorem \ref{main_thm} we have proved that any \emph{globally} optimal solution of \eqref{OCP}-\eqref{OCP_0} is entirely contained in the half-plane $x>0$ or $x<0$. 

In this section, we present a family of \emph{locally} but not \emph{globally} optimal solutions of \eqref{OCP}-\eqref{OCP_0}, that we call ``butterfly solutions" (because of their shape). Such solutions cross the $y$-axis. 

So, let us now consider the optimal control problem \eqref{OCP}-\eqref{OCP_0}, in which we add the constraint
\begin{equation}\label{OCP_cross}
\exists t\in[0,T]\ \mid\ x(t)=0.
\end{equation}
Using Lemma \ref{lem1} and in particular homotheties and shifting in time, the optimal control problem \eqref{OCP}-\eqref{OCP_0}-\eqref{OCP_cross} can be reduced to the optimal control problem \eqref{OCP}-\eqref{OCP_per_0}-\eqref{OCP_cross}, i.e., with $x(0)=x(T)=1$, $y(0)=y(T)=0$, under \eqref{OCP_cross} and under the state constraint
\begin{equation}\label{-1<=x<=1}
-1\leq x(t)\leq 1\qquad\forall t\in[0,T].
\end{equation}
With similar arguments as the ones developed to prove Theorem \ref{main_thm}, it can be proved that, given any $T\geq 2\pi$, the optimal control problem \eqref{OCP}-\eqref{OCP_per_0}-\eqref{OCP_cross}-\eqref{-1<=x<=1} has a unique solution $(x_b(\cdot),y_b(\cdot),u_b(\cdot))$ (the index ``b" refers to ``butterfly"), which is symmetric with respect to the $x$-axis and to the $y$-axis, and which can be explicitly described similarly to what was done in Theorem \ref{main_thm}.
The optimal curve consists of two arcs of an ellipse that are symmetric with respect to the $y$-axis, in-between of which there are two arcs of hyperbole that are symmetric with respect to the $x$-axis (see Figure \ref{fig_butterfly}).

\begin{figure}[h]
\centerline{\includegraphics[width=13cm]{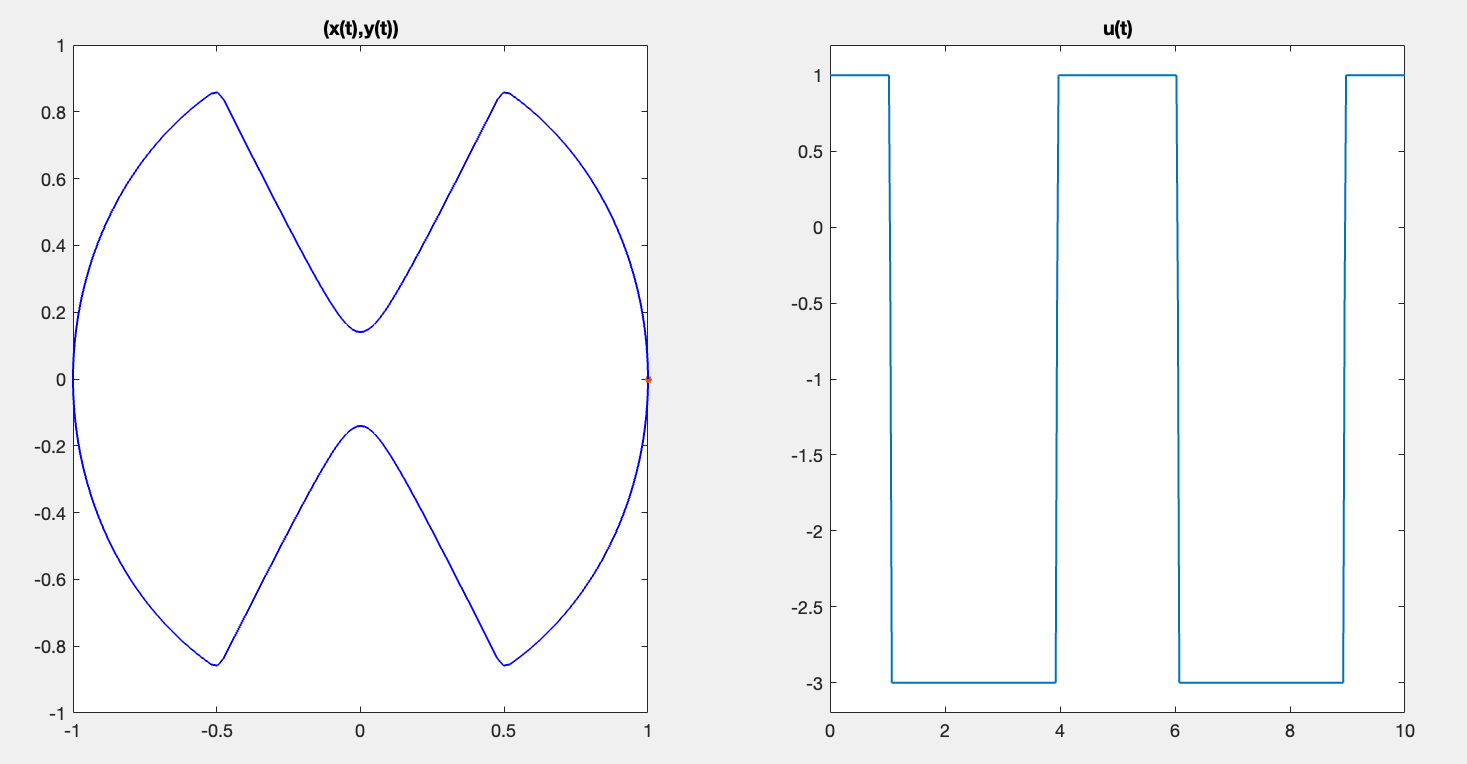}}
\caption{Numerical simulation of the butterfly solution, with $\umin=-3$, $\umax=1$, $T=10$.}\label{fig_butterfly}
\end{figure}

For $T=2\pi$, the butterfly trajectory is given by 
$$
x(t)=\cos(\omax t), 
\qquad y(t)=-\omax\sin(\omax t), 
\qquad u(t)=\umax ,
$$
and the curve is the ellipse $x^2+\frac{y^2}{\omax^2}=1$.
Actually for $T<2\pi$ there does not exist any solution crossing the $y$-axis, i.e., satisfying \eqref{OCP_cross}, so $2=2\pi$ is the minimal possible time for which butterfly solutions exist. 
Then, as $T\geq2\pi$ grows, the ellipse ``opens" at its minimal and maximal values of $y$, and one completes the curve with two arcs of hyperbole, as it can be seen on Figure \ref{fig_butterfly}. 
When $T\rightarrow+\infty$, these arcs of hyperbole tend to a ``V"-shape, with the edge of the ``V" tending to $(0,0)$: this is the turnpike phenomenon, similar to what has been discussed in Section \ref{sec_turnpike}.

\section{Applications in quantum and classical physics}
\subsection{Ground state of 1D Schr\"odinger operators with a finite potential well}

The physical motivation of the optimal control problem \eqref{OCP}-\eqref{OCP_0} lies in the study of one-dimensional Schr\"odinger operators with a finite potential well, given by
$$
P_{t_1,M} = -\frac{d^2}{dt^2} + V_{t_1,M}\, \mathrm{id}
$$
generally considered on the whole real line $\R$ (i.e., $T=+\infty$) or on an interval $\big(-\frac{T}{2},\frac{T}{2}\big)$, with suitable boundary conditions, where the potential $V_{t_1,M}$ is defined by
\begin{equation*}
V_{t_1,M}(t) = \left\{ \begin{array}{ll}
0 & \textrm{if}\ \ \vert t\vert\leq t_1, \\
M & \textrm{if}\ \ \vert t\vert>t_1, 
\end{array}\right.
\end{equation*}
depending on two parameters $M>0$ and $t_1>0$ that are respectively the height and the width of the potential well.

More precisely, 
when $T<+\infty$ we consider $P_{t_1,M}$ on the domain
\begin{equation*}
\begin{split}
D(P_{t_1,M}) &= H^2_{\mathrm{per}}\Big(-\frac{T}{2},\frac{T}{2}\Big) \\
&= \bigg\{ \psi\in H^2\Big(-\frac{T}{2},\frac{T}{2}\Big) \quad \Big\vert\quad  \psi\Big(-\frac{T}{2}\Big)=\psi\Big(\frac{T}{2}\Big), \ \ \dot \psi\Big(-\frac{T}{2}\Big)=\dot \psi\Big(\frac{T}{2}\Big)  \bigg\} 
\end{split}
\end{equation*}
of periodic functions, and 
when $T=+\infty$ we consider the operator on the domain
$$
D(P_{t_1,M}) = \bigg\{ \psi\in H^2\Big(-\frac{T}{2},\frac{T}{2}\Big) \quad \Big\vert\quad \lim_{t\rightarrow-\infty}\psi(t)=\lim_{t\rightarrow+\infty}\dot \psi(t)=0 \bigg\} 
$$
of functions vanishing at infinity.

\medskip

The relationship with the optimal control problem \eqref{OCP}-\eqref{OCP_0} is the following. Let us consider any nontrivial optimal solution $(x(\cdot),y(\cdot),u(\cdot))$ of \eqref{OCP}-\eqref{OCP_per_0}.
As stated in Theorem \ref{main_thm}, they are all given by the same control \eqref{u_opt}, maybe shifted in time.
In Theorem \ref{main_thm}, as well as in its proof, for convenience we assumed that the trajectories were defined on the time interval $[0,T]$. Shifting in time (see \ref{A2} in Lemma \ref{lem_prelim}, let us now assume that they are defined on the time interval $\big(-\frac{T}{2},\frac{T}{2}\big)$. Then, now, the optimal control is given by
\begin{equation*}
u(t) = \left\{ \begin{array}{ll}
\umax & \textrm{if}\ \ \vert t\vert<t_1 , \\
\umin & \textrm{if}\ \ \vert t\vert>t_1 , \\
\end{array}\right.
\end{equation*}
for almost every $t\in\big(-\frac{T}{2},\frac{T}{2}\big)$.
Using \eqref{OCP_dyn_x}-\eqref{OCP_dyn_y}, we infer that, for $t\in\big(-\frac{T}{2},\frac{T}{2}\big)$,
\begin{align*}
- \frac{d^2}{dt^2} x(t) = \umax x(t) & \qquad\textrm{if}\ \ \vert t\vert < t_1  , \\
\Big(- \frac{d^2}{dt^2}+(\umax-\umin)\mathrm{id}\Big) x(t) = \umax x(t) & \qquad\textrm{if}\ \ \vert t\vert > t_1  .
\end{align*}
Therefore
$$
P_{t_1,M} x(\cdot) = \umax x(\cdot) 
\qquad\textrm{with}\qquad M=\umax-\umin = \omax^2+\omin^2.
$$
Since $x(\cdot)$ is nontrivial and 
belongs to $D(P_{t_1,M})$,
this means that $x(\cdot)$ is an eigenfunction of $P_{t_1,M}$ associated with the eigenvalue $\umax$.
Actually it corresponds to the ground state of $P_{t_1,M}$, i.e., $\umax$ is the smallest eigenvalue of $P_{t_1,M}$.

When $T\rightarrow+\infty$, we know, by Remark \ref{rem1}, that 
$$
x\Big(-\frac{T}{2}\Big)=x\Big(\frac{T}{2}\Big) \ \sim\ \frac{\omax}{\omin^2+\omax^2} \exp\bigg( \frac{\omin}{\omax} \mathrm{Arctan}\Big(\frac{\omin}{\omax}\Big) -\omin\frac{T}{2} \bigg) 
\longrightarrow 0
$$
and 
$$
t_1\longrightarrow t_1^\infty = \frac{1}{\omax}\mathrm{Arctan}\Big(\frac{\omin}{\omax}\Big) .
$$
This limit value of $t_1^\infty$ satisfies $\omax \tan(\omax t_1^\infty)=\omin$. It corresponds to well known computations in quantum physics, as explained next.

\paragraph{Computation of the ground state of $P_{t_1,M}$.}
Given a height $M>0$ and a width $t_1>0$, a well known problem in quantum physics consists of determining the ground state of of $P_{t_1,M}$ -- and more generally, the discrete eigenvalues of $P_{t_1,M}$ and their corresponding eigenfunctions.
Recall that the discrete eigenvalues of $P_{t_1,M}$ belong to the interval $(0,M)$ (beyond $M$, we have continuous spectrum).

We first explain this fact for $T=+\infty$, thus recovering classical computations in quantum physics (see, e.g., \cite{Hall}).

Let $\umax\in(0,M)$ be the lowest eigenvalue of $P_{t_1^\infty,M}$.
We set $\omin = \sqrt{M-\omax^2}$, so that $M=\umax-\umin$. Now, we must have
$$
t_1^\infty = \frac{1}{\omax}\mathrm{Arctan}\bigg(\frac{\sqrt{M-\omax^2}}{\omax}\bigg) .
$$
Since the function $\omax\mapsto t_1^\infty(\omax)$ is bijective on $(0,M)$, there exists a unique solution $\omax\in(0,M)$ of the above equation. 
In quantum physics, this solution is seen as the first solution of a quantification relation.
The corresponding eigenfunction (or ground state) is the function $x(\cdot)$ given by Theorem \ref{main_thm}. 

\begin{figure}[h]
\centerline{\includegraphics[width=\textwidth]{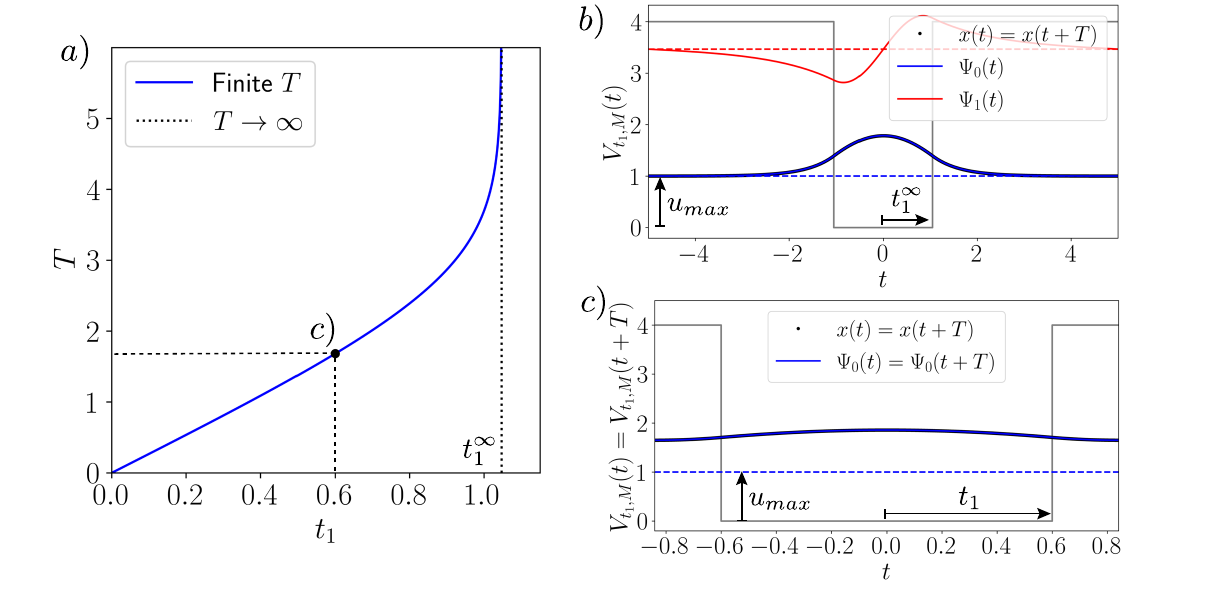}}
\caption{Example of the spectrum computation of a 1D Schr\"odinger operator with a finite potential well for $\umin=-3$ and $\umax=1$. \\
a) Evolution of $T$ as a function of $t_1$ for the optimal solution of \eqref{OCP}-\eqref{OCP_0} given in \eqref{Tt1}. \\
b) ``Bound states'' of a 1D Schr\"odinger operator with a finite potential with $M=\umax - \umin = 4$ and $t_1=t_1^{\infty}$. \\
c) Case with periodic boundary conditions and $M=\umax - \umin = 4$, $t_1=0.6$ and $T$ given in \eqref{Tt1} illustrated in a).}
\label{fig_Schrodinger}
\end{figure}

Figure \ref{fig_Schrodinger} shows an example for $\umax=1$ and $\umin=-3$, i.e., $M=4$. Figure \ref{fig_Schrodinger}a) shows the evolution of $T$ as a function of $t_1$ as given by \eqref{Tt1}. When $T=+\infty$, this function tends to $t_1^\infty$. Figure \ref{fig_Schrodinger}b) shows the operator $V_{t_1,M}$ for $T=+\infty$ ($T=10$ for numerical purposes), $t_1=t_1^\infty$ and $M=4$, as well as the two eigenvalues belonging to the interval $(0,M)$ in dashed lines and the two associated eigenfunctions $\Psi_0(t)$ and $\Psi_1(t)$ (the local $x$-axis of those eigenfunctions is located at the level of their eigenvalues). The eigenfunctions are normalized so that $\int_{-T/2}^{T/2}\Psi_n(t)^2\,dt=1$. As expected, the smallest eigenvalue is $\umax=1$ and the associated eigenfunction $\Psi_0$ corresponds to the solution $x(\cdot)$ of the optimal problem \eqref{OCP}-\eqref{OCP_0} with $\umax=1$, $\umin=-3$ and $T=+\infty$.

\medskip

When $T<+\infty$, the situation is a bit more complicated. Since $t_1$ is given in the quantum problem, the value of $T$ is determined by \eqref{Tt1} as shown in Figure  \ref{fig_Schrodinger}a). For $\umax=1$ and $\umin=-3$, the ($T$-periodic) solution $x(\cdot)$ of the optimal problem \eqref{OCP}-\eqref{OCP_0}, with $t_1=0.6$ for example, corresponds to a period $T \approx 1.682$ as shown on Figure  \ref{fig_Schrodinger}a). Looking for the eigenvalues of the operator $P_{t_1,M}$ with $M = \umax-\umin=4$, $t_1=0.6$, $T \approx 1.682$, we find only one eigenvalue $\umax\in(0,M)$ as shown on Figure  \ref{fig_Schrodinger}c). The $T$-periodic eigenfunction $\Psi_0$, normalized so that $\int_{-T/2}^{T/2}\Psi_n(t)^2\,dt=1$, is drawn on Figure  \ref{fig_Schrodinger}c) on the interval $[-T/2,T/2]$. This eigenfunction corresponds to the optimal solution $x(\cdot)$ of \eqref{OCP}-\eqref{OCP_0} computed with the same parameters.

\subsection{Optimal dynamical stabilization}

Another important application where the optimal control problem \eqref{OCP}-\eqref{OCP_0} matters is the so-called dynamical stabilization phenomenon, known in classical physics as the process by which charged particles can be trapped in periodically varying electromagnetic fields like in mass spectrometers or in trapped ion quantum computers (see \cite{Paul1990electromagnetic}). Dynamical stabilization consists of periodically modulating in time the properties of a system to dynamically sustain one of its naturally unstable configurational states. Recent works in this field (see \cite{Grandiprotierelazarus}) have shown a new modulation parameter regime that exhibits some mathematical analogy with the computation of ground states of 1D Schr\"odinger operators $P_{t_1,M}$ with potential, as described in the previous subsection. In this new framework, optimal control theory can therefore be used to determine the minimal modulation needed to stabilize a system, whence the wording of ``optimal dynamical stabilization".

\begin{figure}[h]
\centerline{\includegraphics[width=\textwidth]{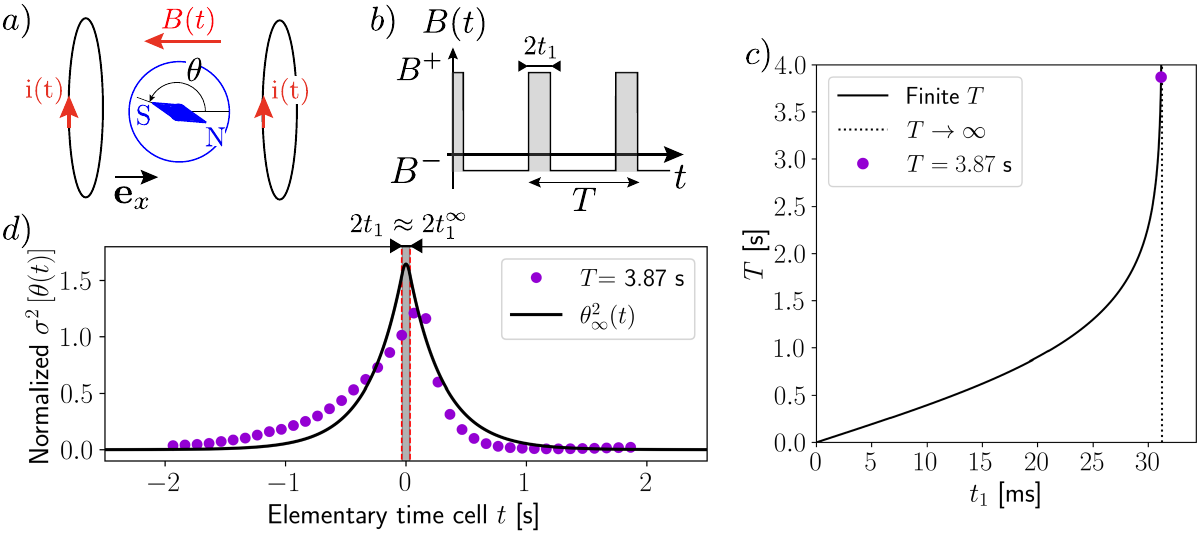}}
\caption{An example of experimental results in optimal dynamical stabilization (see \cite{LazarusTrelat_PNAS}). \\
a) A compass, fully parameterized by the angle $\theta(t)$ between $\mathbf{e}_x$ and its $N-S$ local axis is placed at the center of two Helmholtz coils oriented along $\mathbf{e}_x$. \\
b) We impose a $T$-periodic magnetic field $B(t)=B(t+T)$ with $B(t)=B^+=852$ $\mu$T ($i=-200$ mA) during a time $2t_1$ and $B(t)=B^-=-47$ $\mu$T ($i=0$ A) during $T-2t_1$. \\
c) Evolution of $T$ as a function of $t_1$ as given in \eqref{Tt1} for $\umax=B^+\mu/I=54.5$ (rad/s)$^2$ and $\umin=B^-\mu/I=-3$ (rad/s)$^2$. \\
d) Variance of the dynamically stable oscillatory motion $\theta(t)$ about $\theta=\pi$ for two experiments with $T=3.87$ s and $2t_1=70 \approx t_1^{\infty}=62$ ms. The square of the optimal solution of \eqref{OCP}-\eqref{OCP_0} for $T=\infty$, $\theta_{\infty}(t)$, is shown in black line.}
\label{fig_dynstab}
\end{figure}

The relationship between optimal dynamical stabilization and the optimal control problem \eqref{OCP}-\eqref{OCP_0} was discovered and exploited in \cite{LazarusTrelat_PNAS}. Figure \ref{fig_dynstab}a) is a sketch of the model experiment reported in \cite{LazarusTrelat_PNAS} that is an academic platform to explore the optimal dynamical stabilization of a one degree-of-freedom system in a periodically time-varying potential energy landscape. The experiment consists of a compass centered and aligned between two Helmholtz coils. The dipole's configurational state is fully parametrized in time by the angle $\theta(t)$ between the axis of the coils $\mathbf{e}_x$, coincident with the North-South magnetic axis of Earth, and the $N-S$ axis of the magnetized needle. This classical one degree-of-freedom nonlinear oscillator can be modeled by the nonlinear evolution equation 
\begin{equation}
\ddot{\theta}(t)+2\xi \sqrt{\frac{|B(i(t))|\mu}{I}}\dot{\theta}(t)+\frac{B(i(t))\mu}{I}\sin(\theta(t))=0 ,
\label{eq:compasNL}
\end{equation}
where $\xi=0.3$ is the experimental damping ratio, $\mu/I=6.4 \times 10^4$ A.kg$^{-1}$ is the ratio between the magnetic moment $\mu$ and moment of inertia $I$, and $\sqrt{|B(i(t))|\mu/I}$ is the natural frequency of the dipole around its stable equilibrium position that can be either $\theta(t)=0$ or $\theta(t)=\pi$ depending on the current $i(t)$ in the coils. Here, $B(i(t))=-(B_T + A\, i(t))$ is the magnitude of the uniform magnetic field felt by the dipole with $B_T=47$ $\mu$T (Earth magnetic field), $A=4496 $ $\mu$T/A (a property related to the Helmholtz coils configuration) and $i(t)$ is the current in the coils.

When $i(t)=0$, meaning that no power is supplied to the coils, the magnetic field points in the direction of $-\mathbf{e}_x$ and so does the North pole of the compass. In this case, $B(t)=B^-=-47<0$, making $\theta =0$ a stable equilibrium and $\theta=\pi$ unstable. A compass starting from any initial condition will eventually converge to $\theta = 0$ with damped oscillations, moving away from $\theta = \pi$. A natural physical question is: 
\begin{quote}
\textit{What is the minimal current over time, or more precisely the minimal value of $\int |i(t)| \, dt$, required to stabilize $\theta = \pi$?}
\end{quote}
If we ignore time and only consider constant currents, the answer is straightforward: in the experiment, for any constant current $i(t)=i < -10$ mA, the stability of the equilibrium configurations is reversed, and $\theta=\pi$ becomes asymptotically stable. The minimal value of $\int |i(t)| \, dt$ is at least $10$ mA multiplied by the total time. However, when time modulations are allowed, for example with a periodic current $i(t)=i(t+T)$, it becomes possible to reduce the value of $\int |i(t)| \, dt$, even more by switching off the coils ($i(t)=0$) during part of the cycle, while still maintaining dynamic stability. The problem of minimizing $\int |i(t)| \, dt$ by increasing the duration of $i(t)=0$ while keeping the periodicity condition $i(t)=i(t+T)$ can be framed as the optimal control problem \eqref{OCP}-\eqref{OCP_0}.

\medskip

Figure \ref{fig_dynstab}b) shows an example of a typical periodic current applied in the coils, leading to a positive magnetic field $B(t)=B^+=+852$ $\mu$T when $i(t)=-0.2$ A ($\theta = \pi$ is an attractor) and a negative magnetic field $B(t)=B^-=-47$ $\mu$T when $i(t)=0$ A ($\theta = \pi$ is a repeller). The experiment reported in \cite{LazarusTrelat_PNAS} consists of seeking, in the $(t_1,T)$ modulation parameter space, dynamically stable responses $\theta(t)$, remaining in the vicinity of $\theta=\pi$ for initial conditions $\theta(0) \approx \pi$ and $\dot \theta(0)=0$. This stability can be rationalized by linearizing \eqref{eq:compasNL} about $\theta=\pi$, leading to 
\begin{equation}
\ddot{\theta}(t)+u(t)\theta(t)=0
\label{eq:compasL}
\end{equation}
where $u(t)=u(t+T)=\umax=B^+\mu/I=54.5$ (rad/s)$^2$ during $2t_1$ and $u(t)=\umin=B^-\mu/I=-3$ (rad/s)$^2$ during $T-2t_1$ (since the compass is only doing about half an oscillation for the considered $t_1$, the damping factor can be neglected in a first approximation). By Floquet theory, the solution of \eqref{eq:compasL} can be expressed as $\theta(t)=\Psi(t)e^{st}+\bar{\Psi}(t)e^{-st}$ where $\Psi$ is a complex $T$-periodic function and $s\in\C$. In the $(t_1,T)$ modulation parameter space, one observe an alternance of unstable ($\Re(s)>0$) and dynamically stable ($\Re(s)=0$) tongues (see \cite{LazarusTrelat_PNAS}). The dynamically stable tongues are bounded by $T$ and $2T$ periodic solutions $\theta$, that become narrower as $T$ increases (see \cite{Grandiprotierelazarus,LazarusTrelat_PNAS}).

The $T$-periodic solutions $\theta$ of \eqref{eq:compasL} as well as their associated $T$-periodic modulation function $u$, correspond to the solutions of the optimal control problem \eqref{OCP}-\eqref{OCP_0} (and the lower boundary of the first stability tongue). Notably, they minimize $\int_0^T u(t) \, dt$, i.e., $\int |i(t)| \, dt$, given that $\umin <0$ corresponds to $i(t)=0$ in the experimental setup. When $T \rightarrow +\infty$, one should theoretically be able to dynamically stabilize the compass in $\theta=\pi$ with almost no current! Figure \ref{fig_dynstab}c) shows the evolution of $T$ as a function of $t_1$ according to \eqref{Tt1}  for the optimal $u(\cdot)$ with $\umax=54.5$ and $\umin=-3$. When $T=+\infty$, the optimal control $u(\cdot)$ has a constant duration $2t_1^{\infty}\approx 62$ ms, and a periodic solution, denoted $\theta_{\infty}(\cdot)$, should exist (teardrop optimal trajectory in the state space). In practice, by experimentally imposing $2t_1=70$ ms of $B(t)=B^+$ (the dissipation slightly switches the stability regions in the modulation parameter space), it has been possible to turn off the coils during $T-2t_1=3.8$ s, i.e., more than $98$\% of the time as shown on Figure \ref{fig_dynstab}d). Beyond this period of $T=3.87$ s, the compass is no more stable in practice because of the inherent imperfections of the setup and of the basin of attraction of initial conditions that shrinks about $\theta(0) = \dot \theta(0) = 0$ (see \cite{Grandiprotierelazarus}).

The theoretical optimal solution $\theta_{\infty}(\cdot)$, that is almost a teardrop in the state space for $T=3.87$ s, should predict the periodic oscillations of the compass about $\theta=\pi$. However, due to the nature of optimal dynamical stabilization for large $T$ that is a symmetry breaking that repeats periodically, the observed experimental oscillations are actually quasi-periodic, consisting of a succession of scaled functions $\theta_{\infty}(\cdot)$ periods after periods (see \cite{LazarusTrelat_PNAS}). Moreover, the experiments being sensitive to initial conditions, various experiments with the seemingly same parameters do not lead to the same quasi-periodic responses $\theta(t)$. \textit{In fine}, it is not the oscillations $\theta(\cdot)$ that are well predicted over time by the optimal control problem \eqref{OCP}-\eqref{OCP_0}, but rather their variance $\sigma^2[\theta(t)]$ over one period, that are consistently predicted by $\theta^2_{\infty}(t)$ as shown on Figure \ref{fig_dynstab}d).

\end{document}